\newcommand{\sfW}{{\mathsf{W}}}\newcommand{\sfV}{{\mathsf{V}}}
\newcommand{\sfG}{{\mathsf{G}}}\newcommand{\sfH}{{\mathsf{H}}}
\newcommand{\sfL}{{\mathsf{L}}}
\newcommand{\sfk}{{\mathsf{k}}}\newcommand{\sfS}{{\mathsf{S}}}
\newcommand{\sL}{{\mathscr{L}}}
\newcommand{\FH}{{\mathbf{H}}} \newcommand{\FG}{{\mathbf{G}}}
\newcommand{\FS}{{\mathbf{S}}}
\newcommand{\FW}{{\mathbf{W}}}\newcommand{\FV}{{\mathbf{V}}}
\newcommand{\FJ}{{\mathbf{J}}}
\newcommand{\BC}{{\mathbb {C}}} 
 \newcommand{\BF}{{\mathbb {F}}}
\newcommand{\BQ}{{\mathbb {Q}}} \newcommand{\BR}{{\mathbb {R}}}
 \newcommand{\BZ}{{\mathbb {Z}}}
 \newcommand{\CB}{{\mathcal {B}}}
\newcommand{\fg}{{\mathfrak{g}}} \newcommand{\fh}{{\mathfrak{h}}}
\newcommand{\fo}{{\mathfrak{o}}} \newcommand{\fp}{{\mathfrak{p}}}
 \newcommand{\fz}{{\mathfrak{z}}}
\newcommand{\Ad}{{\mathrm{Ad}}}
\newcommand{\Aut}{{\mathrm{Aut}}}
\newcommand{\disc}{{\mathrm{disc}}}
\newcommand{\End}{{\mathrm{End}}}
\newcommand{\Gal}{{\mathrm{Gal}}} 
\newcommand{\GL}{{\mathrm{GL}}}
\newcommand{\Hom}{{\mathrm{Hom}}}
\newcommand{\Ind}{{\mathrm{Ind}}} \newcommand{\ind}{{\mathrm{ind}}}
\newcommand{\Lie}{{\mathrm{Lie}}}
\newcommand{\Mp}{{\mathrm{Mp}}}
\newcommand{\Nm}{{\mathrm{Nm}}}
\renewcommand{\O}{{\mathrm{O}}}
\newcommand{\red}{\mathrm{red}}
\newcommand{\SO}{{\mathrm{SO}}}\newcommand{\Sp}{{\mathrm{Sp}}}
\newcommand{\Tr}{{\mathrm{Tr}}}
\newcommand{\R}{{\mathrm{R}}}
\newcommand{\U}{{\mathrm{U}}}
\newcommand{\val}{{\mathrm{val}}}
\newcommand{\pair}[1]{\langle {#1} \rangle}
\newcommand{\incl}{\hookrightarrow}
\newcommand{\sk}{\medskip}
\newcommand{\ra}{\rightarrow} 
\newcommand{\bs}{\backslash}
\newcommand{\s}{\sk\noindent}
\theoremstyle{plain}
\newtheorem{thm}{Theorem}[section] \newtheorem{cor}[thm]{Corollary}
\newtheorem{lem}[thm]{Lemma}  \newtheorem{prop}[thm]{Proposition}
\theoremstyle{definition}
\newtheorem{rem}[thm]{Remark}
\numberwithin{equation}{section}
\title{Theta lifts and distinction for regular supercuspidal representations}
\author{Chong Zhang}
\begin{document}
\date{}
\maketitle

\begin{abstract}
This article has a twofold purpose. First, by recent works of Kaletha and Loke--Ma, we give an explicit description of the local theta correspondence between regular supercuspidal representations in the equal rank symplectic-orthogonal case. Second, based on this description, we show that the local theta correspondence preserves distinction with respect to unramified Galois involutions.
\end{abstract}

\tableofcontents

\section{Introduction}
\subsection*{Overview} Theta lifts are concrete realizations of the Langlands functoriality for classical groups or metaplectic groups. In recent years, there are several celebrated developments in this direction, e.g. \cite{lst11,gi14,sz15,gi16,gt16}, just name a few among them. For supercuspidal representations, the cornerstone of admissible representations of $p$-adic reductive groups, a fundamental result of Kudla says that the first occurrence of nonzero theta lifts of supercuspidal representations are also supercuspidal representations. Recently Loke and Ma \cite{lm} gave a description of the local theta correspondence between tame supercuspidal representations in terms of the supercuspidal data. This type of supercuspidal representations was first constructed by Yu \cite{yu01} and developed further by others, e.g. \cite{kim07,hm08}, etc.

In his recent work \cite{kal}, Kaletha found much simpler data, called tame regular elliptic pairs, to parameterize regular supercuspidal representations which are a subclass of tame supercuspidal representations. This new remarkable construction is a generalization of Howe's classical result \cite{how77}, and has many applications in the landscape of the Langlands program. For example, Kaletha constructed the $L$-packets of regular supercuspidal representations (see {\em loc. cit.}), which generalizes previous works \cite{ree08,dr09,dr10,ka14,ry14,kal15}. Hence it is natural to ask for a description of the local theta correspondence between regular supercuspidal representations in terms of tame regular elliptic pairs, which will in turn reflect the relation between the $L$-parameters under theta lifts. We attempt to answer this question in the equal rank symplectic-orthogonal case.

Another application of Kaletha's construction was presented in our previous work \cite{zha2}, which is concerned with the distinction problem. The general theory of distinguished tame supercuspidal representations in terms of supercuspidal data has been studied by Hakim and Murnaghan \cite{hm08}, whose prior or subsequent works treated various typical examples in the case of general linear groups (cf. \cite{hm02a,hm02b,hj12,hak13}). Recently, Hakim \cite{haka,hakb} provided a new appraoch to the construction of tame supercuspidal representations and its application to the distinction problem. On the other hand, it is natural to study the relation between the distinction problem and the Langlands functoriality, especially in the spirit of Sakellaridis and Venkatesh's proposal \cite{sv}, or of Prasad's conjecture \cite{pra} for Galois involutions. In this paper, we will give a simple criterion to detect distinguished regular supercuspidal representations of a general connected reductive group for an unramified Galois involution. Together with our result on theta lifts, this criterion enables us to show that theta lifts preserve distinction.

\subsection*{Main results} Let $E$ be a finite extension field of $\BQ_p$ with $p$ sufficiently large, $\FW$ a $2n$-dimensional symplectic space over $E$ and $\FG=\Sp(\FW)$. We fix an additive character $\psi$ of $E$ of conductor $\fp_E$, where $\fp_E$ is the maximal ideal of the integer ring $\fo_E$ of $E$.

Let $\pi=\pi_{(\FS,\mu,j)}$ be a regular supercuspidal representation of $\FG(E)$, where $\FS$ is a torus over $E$, $\mu$ is a character of $\FS(E)$ and $j:\FS\incl \FG$ is an $E$-embedding such that $(j\FS,\mu)$ is a tame regular elliptic pair of $\FG$. We call $(\FS,\mu,j)$ a regular elliptic triple for short. Our first result (Theorem \ref{thm. theta}) says that up to equivalence there exists a unique $2n$-dimensional quadratic space $\FV$ over $E$ such that the theta lift $\pi'=\theta_{\FV,\FW,\psi}\left(\pi\right)$ is nonzero. Moreover we show that $\pi'$ is a regular supercuspidal representation of $\O(\FV)(E)$, and can be described explicitly in terms of regular elliptic triples. Loke--Ma's description of the local theta correspondence in terms of supercuspidal data is modulo the knowledge about theta lifts over finite fields. However, in our situation, things become easier and clearer, since due to \cite{sri} and \cite{amr96} we have a complete understanding about theta lifts for regular Deligne-Lusztig cuspidal representations.

Next, we turn to the distinction problem. Now we suppose that $F$ is a subfield of $E$ such that $[E:F]=2$ and $E/F$ is unramified.
We also assume that $\FW$ has an $F$-structure coming from $W$, i.e. $\FW$ is obtained from a $2n$-dimensional symplectic space $W$ over $F$ by extension of scalars. Thus $G=\Sp(W)$ is a subgroup of $\FG$, which is fixed by the nontrivial unramified Galois involution $\sigma\in\Gal(E/F)$. Recall that we say $\pi$ is $G(F)$-distinguished if $\Hom_{G(F)}(\pi,{\bf1})$ is nonzero. If $\pi$ is a distinguished regular supercuspidal representation, we will show that the quadratic space $\FV$ in the above paragraph has an $F$-structure coming from $V$ such that the theta lift of $\pi$ to $\O(\FV)$ is $\O(V)(F)$-distinguished (see Theorem \ref{thm. theta and distinction}). The proof of Theorem \ref{thm. theta and distinction} relies
on the results about the distinction problem for general involutions developed in our prior work \cite{zha2}, and also a refinement for unramified Galois involutions given in \S \ref{subsec. criterion}.

It would be of some interests to extend the results in this paper to the dual pair $(\Mp(\FW),\O(\FV))$ where $\Mp(\FW)$ is the metaplectic group with $\dim\FW=2n$ and $\O(\FV)$ is an odd orthogonal group with $\dim\FV=2n+1$. We can also consider the dual pair  $(\Sp(\FW),\O(\FV))$ with $\dim\FW=2n$ and $\dim\FV=2n+2$.

\subsection*{Organization of the paper} The assumptions on the residue characteristic $p$, and necessary notation and convention are given in the rest of this section. In Section \ref{sec. prelim} we recall the background knowledge of supercuspidal data, block decompositions of supercuspidal data for classical groups, and also regular supercuspidal representations. We study the local theta correspondence between regular supercuspidal representations in Section \ref{sec. theta lifts}, and its relation with the distinction problem in Section \ref{sec. distinction}.

\subsection*{Assumptions} Throughout this article, $F$ is a local field which is a finite extension of $\BQ_p$. We assume that $p$ is sufficiently large such that it satisfies both of the assumptions in \cite[\S 1]{zha2} and the hypotheses in \cite[\S 3.4]{kim07}. We refer the reader to \cite[\S 2.1]{kal}, \cite[\S 3.4]{kim07}, \cite[\S 3.1]{lm}, \cite[\S 1]{zha2} for detailed explanations on the roles that these assumptions play in various theories surrounding supercuspidal representations.

\subsection*{Notation and convention} For a $p$-adic field $F$, we denote by $\fo_F$ its ring of integers, by $\fp_F$ the maximal ideal of $\fo_F$, by $\sfk_F$ the residue field, and by $\val_F$ the normalized valuation on $F$ such that $\val_F(F^\times)=\BZ$. For an additive character $\psi$ of $F$ with conductor $\fp_F$, we use $\bar{\psi}$ to denote the additive character on $\sfk_F$ induced by $\psi$. For a quadratic field extension $E$ of $F$, we denote $$E^-=\{x\in E\ |\ \Tr_{E/F}(x)=0\},\ \textrm{and}\ E^1=\{x\in E^\times\ |\ \Nm_{E/F}(x)=1\}.$$

For a reductive group $G$ over $F$, we denote by ${^\circ G}$ its connected component of the identity, and $Z(G)$ the center of $G$. We use the corresponding lowercase gothic letter $\fg$ to denote its Lie algebra. For $\Gamma\in\fg$, let $Z_G(\Gamma)$ denote the stabilizer of $\Gamma$ in $G$.
If $E$ is a finite extension of $F$, we use $\R_{E/F}G$ to denote the Weil restriction of $G$, which is a reductive group over $F$ whose $F$-rational points are $G(E)$.

We denote by $\CB(G,F)$ and $\CB^\red(G,F)$ the extended and reduced Bruhat-Tits buildings of $G(F)$ respectively. For $x\in\CB^\red(G,F)$, we denote by $G(F)_x$ the stabilizer of $x$ in $G(F)$, by $G(F)_{x,0}$ the connected parahoric subgroup attached to $x$, by $G(F)_{x,0+}$ the pro-unipotent radical of $G(F)_{x,0}$, and by $^\circ\sfG_x$ the corresponding connected reductive quotient group over $\sfk_F$. Moreover, if $G$ is a classical group, we denote by $\sfG_x$ the reductive group over $\sfk_F$ such that $\sfG_x(\sfk_F)=G(F)_x/G(F)_{x,0+}$.

As introduced in \cite{mp94}, we denote by $G(F)_{x,r}$ the Moy-Prasad filtration subgroups of $G(F)_{x,0}$ for $r\in\BR_{\geq0}$, and by $\fg(F)_{x,r}$ the filtration lattices of $\fg(F)$ for $r\in\BR$. We write $G(F)_{x,r+}:=\bigcup\limits_{s>r}G(F)_{x,s}$ and $\fg(F)_{x,r+}:=\bigcup\limits_{s>r}\fg(F)_{x,s}$. For $s\geq r$, we write $$G(F)_{x,r:s}:=G(F)_{x,r}/G(F)_{x,s},\quad G(F)_{x,r:s+}:=G(F)_{x,r}/G(F)_{x,s+},$$ $$\fg(F)_{x,r:s}:=\fg(F)_{x,r}/\fg(F)_{x,s},\quad \fg(F)_{x,r:s+}:=\fg(F)_{x,r}/\fg(F)_{x,s+}.$$
For $\Gamma\in\fg(F)$, we denote by $\val(\Gamma)$ the depth of $\Gamma$, that is, 
$$\val(\Gamma)=\sup\limits_{x\in\CB^\red(G,F)}\{r\ |\ x\in\fg_{x,r}\bs\fg_{x,r+} \}.$$

When $G$ is a classical group, there exists a correspondence between the set of self-dual lattice functions and $\CB(G,F)$, or a description of the Moy-Prasad filtration in terms of lattice functions. We refer to \cite[\S 4]{lm16} for a detailed summary.

\subsection*{Acknowledgements} This work was partially supported by NSFC 11501033, Fundamental Research Funds for the Central Universities No. 14380018, and Zheng Gang Scholars Program. The author would like to thank Jia-Jun Ma for kindly answering several questions related to his paper \cite{lm} with Hung Yean Loke, and thank Wen-Wei Li for helpful comments on a preliminary version of this article. He also thanks the anonymous referee for the careful reading and helpful suggestions.

\section{Preliminaries}\label{sec. prelim}

\subsection{Supercuspidal data}
Let $G$ be a reductive group over $F$. We fix an additive character $\psi$ of $F$ of conductor $\fp_F$. Throughout this paper, a {\em supercuspidal $G$-datum} has two meanings. We will abuse the notion when there is no confusion.

First,  a {\em supercuspidal $G$-datum} $\Psi=\left(\vec{G},x,\rho,\vec{\phi}\right)$ means a {\em generic cuspidal $G$-datum} in the sense of \cite{yu01}. To be precise, the datum $\Psi$ statisfies
\begin{enumerate}
\item $\vec{G}$ is a \emph{tamely ramified twisted Levi sequence}
$\vec{G}=(G^0,...,G^d)$ in $G$ such that $Z(G^0)/Z(G)$ is
anisotropic.
\item $x\in\CB^\red(G^0,F)$ is a vertex.
\item $\rho$ is an irreducible representation of $K^0:=G^0(F)_x$ such
that $\rho|_{G^0(F)_{x,0+}}$ is {\bf1}-isotypic and the compactly
induced representation $\pi_{-1}=\ind_{K^0}^{G^0(F)}(\rho)$ is
irreducible.
\item $\vec{\phi}=(\phi_0,...,\phi_d)$ with $\phi_i$ a quasicharacter of $G^i(F)$ for each $0\leq i\leq d$ and being $G^{i+1}$-{\em generic} (cf. \cite[Definition 3.9]{hm08}) for $i\neq d$. We
require that: if $d=0$ then $\phi_0$ is of depth $r_0\geq0$; if
$d>0$ and $\phi_d$ is nontrivial then $\phi_i$ is of depth $r_i$ for
$i=0,...,d$ and $0<r_0<r_1<\cdots<r_{d-1}<r_d$; if $d>0$ and
$\phi_d$ is trivial then $\phi_i$ is of depth $r_i$ for
$i=0,...,d-1$ and $0<r_0<r_1<\cdots<r_{d-1}$. We will call
$\vec{r}=(r_0,...,r_d)$ the {\em depth} of $\vec{\phi}$ for short, and denote
$\phi^\circ:=\prod\limits_{i=0}^d\phi_i|_{G^0(F)}$.
\end{enumerate}

When $G$ is a classical group, things can be simplified as in \cite[\S 3]{lm}. We also adopt the notion therein: a {\em supercuspidal $G$-datum} $\Sigma=(x,\Gamma,\phi,\rho)$ satisfies
\begin{enumerate}
\item $\Gamma$ is a \emph{tamely ramified semisimple} element of $\fg(F)$ and admits a \emph{good factorization} $\Gamma=\sum\limits_{-1\leq i\leq d}\Gamma_i$ (cf. \cite[\S 3.2]{lm}).
\item Set $G^0=Z_G(\Gamma)$. Then $Z\left({^\circ G^0}\right)$ is anisotropic.
    \item $x\in\CB^\red(G^0,F)$ is a vertex.
\item $\rho$ is an irreducible cuspidal representation of the finite group $\sfG^0_x(\sfk_F)$.
\item $\phi:G^0(F)_x\ra\BC^\times$ is a character such that $\phi|_{G^0(F)_{x,0+}}=\psi_\Gamma|_{G^0(F)_{x,0+}}$ (see \cite[\S 3, (3.2)]{lm} for the definition of $\psi_\Gamma$).
\end{enumerate}

We remark that in the above definition we always require ``$\Gamma_{-1}=0$''. See Remarks 2 after \cite[Definition 3.3]{lm} on this point. Given a supercuspidal $G$-datum $\Psi=\left(\vec{G},x,\rho,\vec{\phi}\right)$ such that $\phi_i$ is represented by $\Gamma_i$, we can obtain a supercuspidal $G$-datum $\Sigma=(x,\Gamma,\phi,\rho)$ by setting $\Gamma=\sum\limits_{0\leq i\leq d}\Gamma_i$ and $\phi=\phi^\circ|_{G^0(F)_x}$. Conversely, given a supercuspidal $G$-datum $\Sigma=(x,\Gamma,\phi,\rho)$, there exists a supercuspidal $G$-datum $\Psi=\left(\vec{G},x,\rho,\vec{\phi}\right)$ such that $\phi_i$ is represented by $\Gamma_i$ and $\phi=\phi^\circ|_{G^0(F)_x}$.

When $G$ is connected, Yu obtains an irreducible supercuspidal representation $\pi_\Psi$ or $\pi_\Sigma$ of $G(F)$ from a supercuspidal $G$-datum $\Psi$ or $\Sigma$.  When $G$ is an orthogonal group, as argued in \cite[\S 3.4]{lm}, we can also get an irreducible supercuspidal representation $\pi_\Sigma$ from a supercuspidal $G$-datum $\Sigma$.

\subsection{Block decompositions}
Now we record the block decomposition of a supercuspidal $G$-datum $\Sigma=(x,\Gamma,\phi,\rho)$, which is referred to \cite[\S 4]{lm}. We restrict ourselves to the case that $(V,\pair{\cdot,\cdot}_V)$ is a symplectic or quadratic space over $F$ and $G$ is the corresponding isometry group.

Viewing $\Gamma\in\fg(F)$ as an element of $\End_F(V)$, the algebra $F[\Gamma]$ is isomorphic to a product $\prod\limits_{j\in J}F_j$ of tamely ramified finite extensions $F_j$ of $F$. Since $V$ is an $F[\Gamma]$-module, we have a decomposition $V =\bigoplus\limits_{j\in J}V_j$ where $V_j$ is a subspace and $F_j$ acts faithfully on it. Let $\Gamma^j$ be the $F_j$-component of $\Gamma$ in $\prod\limits_{j\in J}F_j$. Set
$$\{{^br}>\cdots>{^1r}>{^0r}=0\}=\{-\val(\Gamma^j)\geq0\ |\ j\in J\}$$ as (4.1) of {\em loc. cit.}, and put $${^\ell \Gamma}=\sum\limits_{\val(\Gamma^j)=-{^\ell r}}\Gamma^j,\quad {^\ell V}=\bigoplus\limits_{\val(\Gamma^j)=-{^\ell r}}V_j$$ for each $0\leq\ell\leq b$. Note that we have required ``$\Gamma_{-1}=0$'' in the definition of $\Sigma$. Therefore we have ${^0\Gamma}=0$ and ${^0V}=\ker(\Gamma)$. The restriction of $\pair{\cdot,\cdot}_V$ to each ${^\ell V}$ is non-degenerate, whose isometry group is denoted by ${^\ell G}$.

According to Definition 4.4 of {\em loc. cit.}, a {\em depth-zero single block} of $G$ is a supercuspidal $G$-datum of the form $(x,0,\bf{1},\rho)$, and a {\em positive depth single block} of $G$ is a supercuspidal $G$-datum $\Sigma=(x,\Gamma,\phi,\rho)$ such that $\Gamma^j$ has the same negative valuation for all $j\in J$. By Proposition 4.5 of {\em loc. cit.}, there exists a {\em block decomposition} $\bigoplus\limits_{0\leq\ell\leq b}{^\ell\Sigma}=\left({^\ell x},{^\ell\Gamma},{^\ell\phi},{^\ell\rho}\right)$ of $\Sigma=(x,\Gamma,\phi,\rho)$ such that $^0\Sigma$ is a depth-zero single block of $^0G$ and $^\ell\Sigma$ is a positive depth single block of $^\ell G$ for each $1\leq\ell\leq b$. See Proposition 4.5 of {\em loc. cit.} for more details on the block decompositions. Conversely, given single blocks satisfying certain natural conditions, we can obtain a supercuspidal datum by direct sum (cf. Lemma 4.7 of {\em loc. cit.}).

\subsection{Regular supercuspidal representations}

In \cite{kal}, Kaletha defined a subclass of supercuspidal $G$-data, called regular regular supercuspidal data. A supercuspidal $G$-datum $\Psi$ is called {\em regular} if $\rho|_{G^0(F)_{x,0}}$ contains the inflation to $G^0(F)_{x,0}$ of an irreducible cuspidal representation $\kappa$ of $^\circ\sfG^0_x(\sfk_F)$ such that $\kappa$ is a Deligne-Lusztig cuspidal representation $\pm R_\sfS^{^\circ\sfG^0_x}(\lambda)$ attached to an elliptic maximal torus $\sfS$ of $^\circ\sfG^0_x$ and a character $\lambda$ of $\sfS(\sfk_F)$ in general position. An irreducible supercuspidal representation $\pi$ of $G(F)$ is called {\em regular} if it is equivalent to $\pi_\Psi$ for some regular supercuspidal $G$-datum $\Psi$.

In {\em loc. cit.}, Kaletha found more convenient data than supercuspidal data to parameterize regular supercuspidal representations. Recall that a {\em tame regular elliptic pair $(S,\mu)$} of $G$ contains a tame elliptic maximal torus $S$ of $G$ and a character $\mu$ of $S(F)$ satisfying the conditions in Definition 3.6.5 of {\em loc. cit}. From a tame regular elliptic pair $(S,\mu)$, a {\em Howe factorization} of $(S,\mu)$ (cf. \S 3.7 of {\em loc. cit.}) provides a regular supercuspidal $G$-datum $\Psi=(\vec{G},x,\rho_{(S,\mu_0)},\vec{\phi})$ (also called a Howe factorization) where
\begin{enumerate}
\item $S$ is a maximally unramified elliptic maximal torus of $G^0$,
\item $\mu_0$ is a regular depth-zero character of $S(F)$ with respect to $G^0$,
\item $\rho_{(S,\mu_0)}$ is defined and constructed in \S 3.4.3 and \S 3.4.4 of {\em loc. cit.},
\item $\vec{\phi}$ satisfies $\mu=\mu_0\cdot\phi^\circ|_{S(F)}$.
    \end{enumerate}
Let $\pi_{(S,\mu)}$ denote $\pi_\Psi$, which is a regular supercuspidal representation and whose equivalence class does not depend on the choice of Howe factorizations. It is known that all regular supercuspidal representations are of the forms $\pi_{(S,\mu)}$, and $\pi_{(S_1,\mu_1)}$ and $\pi_{(S_2,\mu_2)}$ are equivalent if and only if the pairs $(S_1,\mu_1)$ and $(S_2,\mu_2)$ are $G(F)$-conjugate.

For latter use, $(S,\mu,j)$ is called a {\em regular elliptic triple} if $S$ is an $F$-torus, $\mu$ is a character of $S(F)$, and $j:S\ra G$ is an $F$-embedding such that $(jS,\mu\circ j^{-1})$ is a tame regular elliptic pair of $G$. If there is no confusion, we will write $\mu$ instead of $\mu\circ j^{-1}$ for short. The meaning of the \emph{Howe factorizations} of $(S,\mu,j)$ is obvious. We will denote by $\pi_{(S,\mu,j)}$ the regular supercuspidal representation $\pi_{(jS,\mu)}$.

When $G$ is a classical group, a regular elliptic triple $(S,\mu,j)$ gives rise to a Howe factorization $\left(\vec{G},x,\rho_{(S,\mu_0)},\vec{\phi}\right)$  and thus a supercuspidal $G$-datum $\Sigma=\left(x,\Gamma,\phi,\rho_{(S,\mu_0)}\right)$. We also call $\Sigma$ a Howe factorization of $(S,\mu,j)$.

\subsection{Regular elliptic triples of classical groups}
When $G$ is a classical group, the conjugacy classes of embeddings of maximal tori have a nice parametrization, which is well known (cf. \cite[IV.2]{ss70}, \cite{wal01} or \cite[\S3.1]{li}). As before, we restrict ourselves to symplectic and orthogonal groups, and only consider elliptic maximal tori.
Let $(V,\pair{\cdot,\cdot}_V)$ be a $2n$-dimensional symplectic or quadratic space over $F$ and $G$ the corresponding isometry group.

Consider the datum $$(L,L^\circ,c)=\prod_{i\in I}(L_i,L_i^\circ,c_i),$$ where for each $i\in I$
\begin{enumerate}
\item $L_i^\circ$ is a field extension of $F$ of degree $m_i$,;
\item $L_i$ is a quadratic field extension of $L_i^\circ$;
\item $c_i\in L_i^\times$;
\item $\sum\limits_{i\in I}m_i=n$.
\end{enumerate}
Two data $(L,L^\circ,c)$ and $(K,K^\circ,d)$ are said to be equivalent if there exists an $F$-isomorphism $\varphi:L\ra K$ such that $F(L^\circ)=K^\circ$ and $\varphi(c)d^{-1}\in\Nm_{K/K^\circ}(K^\times)$.

If $c\in L^-:=\prod\limits_{i\in I}L_i^-$, then $(L,\pair{\cdot,\cdot}_c)$ is a $2n$-dimensional symplectic space over $F$, where $\pair{x,y}_c:=\Tr_{L/F}(cx\bar{y})$ and $y\mapsto\bar{y}$ is the involution with respect to the quadratic extension $L/L^\circ$. On the other hand, if $c\in L^\circ$, $(L,\pair{\cdot,\cdot}_c)$ is a $2n$-dimensional quadratic space over $F$, where $\pair{\cdot,\cdot}_c$ is defined in the same way. In any case, the norm-one subgroup $L^1:=\prod\limits_{i\in I}L^1_i$ with respect to $L/L^\circ$ preserves $(L,\pair{\cdot,\cdot}_c)$ under multiplication. For convenience,  $L^1$ will also denote the norm-one $F$-torus whose $F$-rational points are $L^1$. Thus we obtain an embedding of an elliptic maximal torus $$j:S=L^1\incl\U(L),$$ where $\U(L)$ is the isometry group of $(L,\pair{\cdot,\cdot}_c)$.

When $V$ is a symplectic space, the equivalence classes of the data $(L,L^\circ,c)$ with $c\in L^-$ such that $(L,\pair{\cdot,\cdot}_c)\cong (V,\pair{\cdot,\cdot}_V)$ (automatically for symplectic spaces) are bijective with the $G(F)$-conjugacy classes of embeddings of elliptic maximal $F$-tori $j:S=L^1\incl\U(L)\cong G$.

When $V$ is an orthogonal space, the equivalence classes of the data $(L,L^\circ,c)$ with $c\in L^\circ$ such that $(L,\pair{\cdot,\cdot}_c)\cong (V,\pair{\cdot,\cdot}_V)$ are bijective with the $G(F)$-conjugacy classes of embeddings of elliptic maximal $F$-tori $j:S=L^1\incl\U(L)\cong G$. Note that $j$ is actually an embedding into $\SO(V)$.

Therefore the $G(F)$-conjugacy classes of regular elliptic triples $(S,\mu,j)$ are parameterized by the equivalence classes of the data $$(L,L^\circ,c,\chi)=\prod_{i\in I}(L_i,L_i^\circ,c_i,\chi_i),$$ where 
\begin{enumerate}
	\item $(L,L^\circ,c)$ is as above; 
	\item $\chi=\bigotimes\limits_{i\in I}\chi_i=\mu$ is a character of $L^1=\prod\limits_{i\in I}L_i^1=S(F)$ such that the corresponding triple $(S,\mu,j)$ satisfies the extra conditions in \cite[Definition 3.6.5]{kal}.
\end{enumerate}

\section{Theta lifts}\label{sec. theta lifts}
In this section we study the theta lifts of regular supercuspidal representations from symplectic groups to even orthogonal groups. We restrict ourselves to the equal rank case, i.e. the symplectic and quadratic spaces are of the same dimension. The results of \cite{sri} on the theta lifts over finite fields and \cite{lm} on the theta lifts between tame supercuspidal representations are crucial to us. Throughout this section, we suppose that $(W,\pair{\cdot,\cdot}_W)$ is a $2n$-dimensional symplectic space over a $p$-adic field $F$, $\psi$ is a fixed additive character of $F$ of conductor $\fp_F$, and $G=\Sp(W)$.

\subsection{Theta lifts over finite fields}

Let $\mathsf{k}$ be a finite field. Let $\sfW$ and $\mathsf{V}$ be a symplectic and a quadratic vector space over $\mathsf{k}$ of dimension $2n$ respectively. Recall that up to equivalence there are two $2n$-dimensional quadratic spaces $\mathsf{V}^+$ and $\mathsf{V}^-$ over $\mathsf{k}$, whose discriminants are trivial and nontrivial in $\mathsf{k}/\mathsf{k}^{\times2}$ respectively. Denote $\mathsf{G}=\Sp(\mathsf{W})$, $\mathsf{H}^\circ=\SO(\mathsf{V})$ and $\mathsf{H}=\O(\mathsf{V})$.

Analogous to the $p$-adic case, the $\mathsf{G}(\mathsf{k})$-conjugacy classes of elliptic maximal tori $\sfS$ of $\mathsf{G}$ are parameterized by the data $(\sfL,\sfL^\circ)=\prod\limits_{1\leq i\leq r}\left(\sfL_i,\sfL_i^\circ\right)$, where $\sfL_i^\circ$ and $\sfL_i$ are field extensions of $\sfk$ of degree $m_i$ and $2m_i$ respectively, $\sfL_i^\circ$ is a subfield of $\sfL_i$, and $\sum\limits_{1\leq i\leq r}m_i=n$. Under this correspondence, we have $$\sfS(\sfk)\cong\sfL^1:=\prod\limits_{1\leq i\leq r}\sfL_i^1.$$

Similarly, the $\sfH(\sfk)$-conjugacy classes of elliptic maximal tori $\sfS$ of $\sfH^\circ$ are parameterized by the data $(\sfL,\sfL^\circ)=\prod\limits_{1\leq i\leq r}\left(\sfL_i,\sfL_i^\circ\right)$ with one more condition: we require that $r$ is even (resp. odd) if $\sfV$ is equivalent to $\sfV^+$ (resp. $\sfV^-$). We also have $\sfS(\sfk)\cong\sfL^1$.

The above parametrization establishes a correspondence between the conjugacy classes of elliptic maximal tori of $\sfG$ and those of $\sfH^\circ$:
$$\begin{aligned}&\left\{\textrm{elliptic maximal torus of $\sfG$}\right\}/\sfG(\sfk)-\textrm{conj.}\\
\stackrel{1:1}{\longleftrightarrow}&\bigsqcup_{\sfV=\sfV^+,\sfV^-}\left\{\textrm{elliptic maximal torus of $\sfH^\circ$}\right\}/\sfH(\sfk)-\textrm{conj.}\end{aligned}$$ which is explicitly given by
$$\left(\sfS\leftrightarrow(\sfL,\sfL^\circ)\right)\ \mapsto\ \left(\sfS\leftrightarrow(\sfL,\sfL^\circ)\right).$$ Let $\Omega(\sfS,\sfG)=N(\sfS,\sfG)/\sfS$, $\Omega(\sfS,\sfH)=N(\sfS,\sfH)/\sfS$ and $\Omega(\sfS,\sfH^\circ)=N(\sfS,\sfH^\circ)/\sfS$ be the absolute Weyl goups. It is well known that $\Omega(\sfS,\sfH^\circ)$ is a subgroup of $\Omega(\sfS,\sfH)$ of index two, and $\Omega(\sfS,\sfH)$ can be naturally identified with $\Omega(\sfS,\sfG)$.   Taking $\sfk$-rational points, we have
$$\Omega(\sfS,\sfG)(\sfk)=\Omega(\sfS,\sfH)(\sfk)$$ inside $\Aut_\sfk(\sfS)$. We refer to \cite[Proposition 3.1.5]{li} for a description of $\Omega(\sfS,\sfG)(\sfk)$ or $\Omega(\sfS,\sfH)(\sfk)$ in terms of $(\sfL,\sfL^\circ)$.

For an elliptic maximal torus $\sfS$ of $\sfG$ and a character $\mu$ of $\sfS(\sfk)$ in general position, let $\pm R_\sfS^\sfG(\mu)$ be the Deligne-Lusztig cuspidal representation of $\sfG(\sfk)$ attached to $(\sfS,\mu)$. Let $\sfV$ be the quadratic space such that $\sfS\subset\O(\sfV)$, and $\sfV'$ the other quadratic space of dimension $2n$. Write $\sfH=\O(\sfV)$ and $\sfH'=\O(\sfV')$. Choose $\varepsilon\in\sfH(\sfk)$ such that $\det(\varepsilon)=-1$. The $\sfH^\circ(\sfk)$-conjugacy classes in the $\sfH(\sfk)$-conjugacy class of $\sfS$ is represented by $\{\sfS,\sfS^\varepsilon\}$. Since the stabilizer of $\mu$ in $\Omega(\sfS,\sfG)(\sfk)$ is trivial, so is the stabilizer of $\mu$ in $\Omega(\sfS,\sfH)(\sfk)$. Hence we have $$\left(\pm R_\sfS^{\sfH^\circ}(\mu)\right)^\varepsilon\cong\pm R_{\sfS^\varepsilon}^{\sfH^\circ}(\mu^\varepsilon)\ncong\pm R_\sfS^{\sfH^\circ}(\mu).$$ Therefore $$\pm R_\sfS^{\sfH}(\mu):=\Ind_{\sfH^\circ(\sfk)}^{\sfH(\sfk)}\left(\pm R_\sfS^{\sfH^\circ}(\mu)\right)$$ is an irreducible representation of $\sfH(\sfk)$, which is well defined for the $\sfH(\sfk)$-conjugacy class $\sfS$ in $\sfH^\circ$. Combining \cite[Corollary 1]{sri} and \cite[Proposition 2.1]{amr96}, we obtain:

\begin{lem}\label{lem. theta over finite field}
The theta lift of $\pm R_\sfS^\sfG(\mu)$ to $\sfH$ is $\pm R_\sfS^\sfH(\mu)$, and the theta lift of $\pm R_\sfS^\sfG(\mu)$ to $\sfH'$ vanishes.
\end{lem}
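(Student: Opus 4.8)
The plan is to reduce the statement to known explicit results on the theta correspondence for Deligne--Lusztig representations, namely Srinivasan's computation \cite{sri} of the uniform projection (and Howe duality data) together with the Adler--Moy--Ramakrishnan sign and occurrence bookkeeping in \cite{amr96}. First I would recall that for the reductive dual pair $(\sfG,\sfH^\circ)=(\Sp(\sfW),\SO(\sfV))$ in equal rank over a finite field $\sfk$, the Weil representation restricted to $\sfG(\sfk)\times\sfH^\circ(\sfk)$ decomposes with multiplicity one on the cuspidal part, and the theta lift of a cuspidal Deligne--Lusztig representation is computed by pairing virtual characters. Concretely, $R_\sfS^\sfG(\mu)$ and $R_{\sfS'}^{\sfH^\circ}(\mu')$ occur in the same summand of the oscillator representation precisely when the corresponding tori and characters match up under the parametrization by $(\sfL,\sfL^\circ)$ recalled above, and this matching is exactly the bijection on conjugacy classes of elliptic maximal tori established in the excerpt. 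So the first occurrence of $\pm R_\sfS^\sfG(\mu)$ lands on the quadratic space $\sfV$ with $\sfS\subset\O(\sfV)$ (the parity of $r$ being forced by whether $\sfV\cong\sfV^+$ or $\sfV^-$), not on $\sfV'$.

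Second, I would pin down the sign. Both $\pm R_\sfS^\sfG(\mu)$ and $\pm R_\sfS^{\sfH^\circ}(\mu)$ are honest irreducible representations because $\mu$ is in general position (regular), so the only question is which of $\pm R_\sfS^{\sfH^\circ}(\mu)$ appears; here I would quote \cite[Corollary 1]{sri} directly, which identifies the theta lift on $\SO(\sfV)$ as $\pm R_\sfS^{\sfH^\circ}(\mu)$ with the correct sign, and note that the sign convention is compatible with the normalization $\pm R_\sfS^\sfG(\mu)$ on the symplectic side. Third, I would pass from $\sfH^\circ=\SO(\sfV)$ to $\sfH=\O(\sfV)$: the theta lift to the full orthogonal group is, by \cite[Proposition 2.1]{amr96}, obtained by inducing the $\SO$-lift. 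Since the stabilizer of $\mu$ in $\Omega(\sfS,\sfH)(\sfk)=\Omega(\sfS,\sfG)(\sfk)$ is trivial, the two $\sfH^\circ(\sfk)$-conjugates $\sfS,\sfS^\varepsilon$ give non-isomorphic representations $\pm R_\sfS^{\sfH^\circ}(\mu)$ and $(\pm R_\sfS^{\sfH^\circ}(\mu))^\varepsilon$, hence $\Ind_{\sfH^\circ(\sfk)}^{\sfH(\sfk)}(\pm R_\sfS^{\sfH^\circ}(\mu))=\pm R_\sfS^\sfH(\mu)$ is irreducible — this is exactly the computation recorded just before the lemma — and it is the theta lift of $\pm R_\sfS^\sfG(\mu)$ to $\sfH$.

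Finally, the vanishing on $\sfH'=\O(\sfV')$: if $\sfS\subset\O(\sfV)$ then $\sfS$ is not conjugate into $\O(\sfV')$ because the parities of $r$ differ, so $\pm R_\sfS^\sfG(\mu)$ cannot occur on $\sfV'$ at the equal-rank stage; equivalently, the matching datum $(\sfL,\sfL^\circ)$ forces the discriminant of the quadratic space. Concretely one checks $\dim\Hom_{\sfG(\sfk)}(\omega_{\sfV',\sfW},\pm R_\sfS^\sfG(\mu))=0$ by the same character-pairing identity, since no cuspidal Deligne--Lusztig representation of $\O(\sfV')(\sfk)$ pairs nontrivially with $\pm R_\sfS^\sfG(\mu)$ — the only candidates would be those attached to an elliptic torus of $\SO(\sfV')$ with the same parameter $(\sfL,\sfL^\circ)$, which do not exist. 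The main obstacle I anticipate is purely one of bookkeeping: making the sign conventions in \cite{sri} (written in the language of Gelfand--Graev or uniform functions) match the normalization $\pm R_\sfS^{\sfG}(\mu)$ used here, and keeping the $\O$ versus $\SO$ induction/restriction straight so that ``well defined for the $\sfH(\sfk)$-conjugacy class $\sfS$'' is genuinely independent of the choice of representative. Everything else is a direct citation of \cite{sri} and \cite{amr96} combined with the torus parametrization already set up above.
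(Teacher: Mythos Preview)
Your proposal is correct and follows the same approach as the paper: the lemma is obtained directly by combining \cite[Corollary~1]{sri} with \cite[Proposition~2.1]{amr96}, together with the irreducibility of the induced representation $\pm R_\sfS^\sfH(\mu)$ (which the paper records immediately before the lemma, exactly as you do). One small correction: the reference \cite{amr96} is Aubert--Michel--Rouquier, not ``Adler--Moy--Ramakrishnan''.
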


\begin{rem}\label{rem. theta over finite field}
Since $-1\in\Omega(\sfS,\sfG)(\sfk)$, we have $\pm R_\sfS^\sfG(\mu)=\pm R_\sfS^\sfG(\mu^{-1})$. Therefore we can rewrite the above lemma: the theta lift of $\pm R_\sfS^\sfG(\mu)$ to $\sfH$ is $\pm R_\sfS^\sfH(\mu^{-1})$.
\end{rem}

\subsection{Depth-zero single block}
We first consider theta lifts of regular depth-zero supercuspidal representations. In this case, since $G$ is split, regular depth-zero supercuspidal representations are parameterized by the {\em unramified} regular elliptic triples $(S,\mu,j)$, that is, we further require that $S$ is unramified and $\mu$ is of depth zero. From now on, we fix a uniformizer $\varpi$ of $\fp_F$.

\begin{prop}\label{prop. depth zero}
Let $\pi_{(S,\mu,j)}$ be a regular depth-zero supercuspidal representation of $G(F)$. Then we have the following statements.
\begin{enumerate}
\item Up to equivalence, there exists a unique $2n$-dimensional quadratic space $V$ over $F$ such that the theta lift $\theta_{V,W,\psi}\left(\pi_{(S,\mu,j)}\right)$ is nonzero.
\item The theta lift $\theta_{V,W,\psi}\left(\pi_{(S,\mu,j)}\right)$ is a regular depth-zero supercuspidal representation attached to certain regular elliptic triple $(S,\mu^{-1},j_\theta)$ of $\O(V)$.
\item Suppose that $(S,\mu,j)$ corresponds to $(L,L^\circ,c,\chi)=\prod\limits_{i\in I}(L_i,L_i^\circ,c_i,\chi_i)$. Then
     \begin{itemize}
     \item $(S,\mu^{-1},j_\theta)$ corresponds to $$(L,L^\circ,c_\theta,\chi^{-1}):=(L,L^\circ,c\tau\varpi,\chi^{-1})=\prod\limits_{i\in I}(L_i,L_i^\circ,c_i\tau_i\varpi,\chi^{-1}_i),$$
         \item $(V,\pair{\cdot,\cdot}_V)\cong(L,\pair{\cdot,\cdot}_{c\tau\varpi})$,\end{itemize} where $\tau=\prod\limits_{i\in I}\tau_i$ with $\tau_i\in L_i^-$ and $\val_{L_i}(\tau_i)=0$ for each $i\in I$.
\end{enumerate}
\end{prop}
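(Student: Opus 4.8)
\textbf{Proof proposal for Proposition \ref{prop. depth zero}.}

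The plan is to reduce the $p$-adic theta lift of a depth-zero supercuspidal representation to the theta lift over the residue field, and then apply Lemma \ref{lem. theta over finite field} together with the parametrization of elliptic maximal tori recalled in \S\ref{sec. prelim}. First I would use the description of depth-zero supercuspidal representations: since $S$ is unramified and $\mu$ has depth zero, the datum $(S,\mu,j)$ produces a vertex $x\in\CB^\red(G,F)$ such that $\sfS_x := \sfS$ is an elliptic maximal torus of the finite reductive quotient $\sfG_x=\Sp(\sfW_x)$, and $\rho_{(S,\mu_0)}$ is (up to sign) the Deligne--Lusztig cuspidal representation $\pm R_\sfS^{\sfG_x}(\bar\mu)$ inflated to $G(F)_x$, where $\bar\mu$ is the character of $\sfS(\sfk_F)$ induced by $\mu$. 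Here $\sfW_x$ is the symplectic $\sfk_F$-space $\Lambda_x/\varpi\Lambda_x$ attached to the self-dual lattice at $x$. Then I would invoke \cite{lm} (the depth-zero single block case of their main theorem, \S\ref{sec. theta lifts}) to identify $\theta_{V,W,\psi}(\pi_{(S,\mu,j)})$, for the appropriate choice of quadratic space $V$, as the compactly-induced representation built from the depth-zero datum whose finite-field ingredient is exactly the theta lift of $\pm R_\sfS^{\sfG_x}(\bar\mu)$; by Lemma \ref{lem. theta over finite field} this finite-field theta lift is $\pm R_\sfS^{\sfH_x}(\bar\mu)=\pm R_\sfS^{\sfH_x}(\bar\mu^{-1})$ (Remark \ref{rem. theta over finite field}), nonzero only for the $\sfV$ containing $\sfS$, which pins down $V$ up to equivalence and gives (1) and the ``regular depth-zero supercuspidal'' and ``$(S,\mu^{-1},\cdot)$'' assertions of (2).

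The substantive part is (3): computing the lattice-function data of $V$ and the embedding $j_\theta$ explicitly. Here the idea is that the residue-field torus underlying the theta lift is the \emph{same} $\sfS\leftrightarrow(\sfL,\sfL^\circ)$ (by the explicit $1{:}1$ correspondence of elliptic maximal tori of $\sfG$ and $\sfH^\circ$ recalled above), so $j_\theta$ must correspond to a datum $(L,L^\circ,c_\theta,\chi^{-1})$ with the same $(L,L^\circ)$ and with $\chi$ replaced by $\chi^{-1}$ (matching $\mu\mapsto\mu^{-1}$). The only thing to determine is $c_\theta\in L^\circ$ (it must land in $L^\circ$ since the target is orthogonal, whereas $c\in L^-$ since the source is symplectic), subject to: the reduction of $(L,\pair{\cdot,\cdot}_{c_\theta})$ at the relevant vertex recovers the $\sfk_F$-quadratic space $\sfV_x$ appearing in the finite-field lift, and the discriminant/parity of $\sfV_x$ is governed by the parity of $r=\#I$ as in the finite-field parametrization. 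I would check directly that $c_\theta = c\tau\varpi$ does the job: multiplication by $\varpi$ converts the symplectic form $\Tr_{L/F}(cx\bar y)$ (with $c\in L^-$) into the orthogonal one after the twist by $\tau$ (with $\tau_i\in L_i^-$, $\val_{L_i}(\tau_i)=0$, so that $\tau_i\varpi\in L_i^\circ$ up to the right valuation and $c_i\tau_i\varpi$ has the valuation forcing a nontrivial block in the reduction), and that the resulting self-dual lattice chain reduces to $\sfV_x$ with the correct discriminant. The choice of $\tau$ reflects the ambiguity $\Nm_{L/L^\circ}(L^\times)$ in the equivalence relation on data, so different admissible $\tau$ give equivalent $(L,L^\circ,c_\theta,\chi^{-1})$.

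The main obstacle I anticipate is bookkeeping rather than conceptual: making precise the ``reduction'' functor from self-dual lattice functions to finite-field forms (via \cite[\S4]{lm16}) and checking that multiplication by $\tau\varpi$ on $L$ carries the self-dual lattice attached to $j$ to the self-dual lattice attached to $j_\theta$, compatibly with the building points $x$ produced by the two Howe factorizations. A secondary point needing care is the sign/normalization: tracking that the additive character $\psi$ of conductor $\fp_F$ and the induced $\bar\psi$ on $\sfk_F$ match the normalization under which Lemma \ref{lem. theta over finite field} is stated, and that the passage $\mu\mapsto\mu^{-1}$ is consistent across the $p$-adic and finite-field statements (using $-1\in\Omega(\sfS,\sfG)(\sfk_F)$ as in Remark \ref{rem. theta over finite field}). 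Once these identifications are in place, statements (1)--(3) follow by assembling the depth-zero single-block case of the Loke--Ma correspondence with the finite-field computation.
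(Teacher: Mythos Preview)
Your overall strategy is right, but there is a structural gap in your description of the reductive quotient $\sfG_x$ that would cause the argument to fail in general. You write that $\sfG_x=\Sp(\sfW_x)$ with $\sfW_x=\Lambda_x/\varpi\Lambda_x$ for ``the self-dual lattice at $x$''. This is only correct when $x$ is hyperspecial. For a general vertex of $\Sp_{2n}$, the associated lattice $\sL$ is only \emph{almost} self-dual, i.e.\ $\varpi\tilde{\sL}\subset\sL\subset\tilde{\sL}$, and the reductive quotient is a \emph{product} $\sfG_x=\Sp(\sfW_1)\times\Sp(\sfW_2)$ with $\sfW_1=\sL/\varpi\tilde{\sL}$ and $\sfW_2=\tilde{\sL}/\sL$. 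Concretely, in terms of the datum $(L,L^\circ,c)$, the index set $I$ splits as $I_1\sqcup I_2$ according to the parity of $\val_{L_i}(c_i)$, and $\sfW_k=\bigoplus_{i\in I_k}\sfL_i$. The Deligne--Lusztig representation $\rho_{(S,\mu_0)}$ is then a tensor product $\kappa_1\otimes\kappa_2$, and the finite-field theta lift (via \cite[Theorem 5.6]{pan02}, not \cite{lm}) involves \emph{two} dual pairs $(\Sp(\sfW_1),\O(\sfV_1))$ and $(\Sp(\sfW_2),\O(\sfV_2))$ simultaneously. Lemma \ref{lem. theta over finite field} must be applied to each factor, and the uniqueness of $V$ comes from the dimension count $\dim\sfV_1+\dim\sfV_2=2n$ together with $\dim\sfV_k\geq\dim\sfW_k$ forcing equality.

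This two-block structure is also what makes part (3) work: multiplying $c$ by $\tau\varpi$ shifts each $\val_{L_i}(c_i)$ by one, hence \emph{swaps} the roles of $I_1$ and $I_2$ when you pass from the symplectic to the orthogonal side, so that the orthogonal vertex $y$ attached to $j_\theta$ has $\sfH_y=\O(\sfV_1)\times\O(\sfV_2)$ with $\sfV_k=\bigoplus_{i\in I_k}\sfL_i$ exactly matching the target of the finite lift. Your remark that ``the discriminant/parity of $\sfV_x$ is governed by the parity of $r=\#I$'' is therefore too coarse: it is the pair $(|I_1|,|I_2|)$ that determines the pair $(\sfV_1,\sfV_2)$, and hence $V$. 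Once you incorporate this product structure, the rest of your outline (including the use of Remark \ref{rem. theta over finite field} for $\mu\mapsto\mu^{-1}$ and the observation that $c_i\tau_i\in L_i^\circ$) goes through along the lines of the paper.
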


\begin{proof}
Suppose that $(S,\mu,j)$ corresponds to $(L,L^\circ,c,\chi)=\prod\limits_{i\in I}(L_i,L_i^\circ,c_i,\chi_i)$. We may assume $(W,\pair{\cdot,\cdot}_W)=(L,\pair{\cdot,\cdot}_c)$. Note that all of $L_i$'s and $L_i^\circ$'s are unramified over $F$. Denote by $\sfL_i$ and $\sfL^\circ_i$ the residue fields of $L_i$ and $L_i^\circ$ respectively. Let
$$I_1=\{i\in I \ |\  \val_{L_i}(c_i)\textrm{ is even} \}\ \textrm{and}\ I_2=\{i\in I \ |\  \val_{L_i}(c_i)\textrm{ is odd} \}.$$
We can assume $\val_{L_i}(c_i)=0$ for $i\in I_1$, and $\val_{L_i}(c_i)=1$ for $i\in I_2$. Let $\sL\subset W$ be the unique lattice stable under $S(\fo')$ for each integer ring $\fo'$ of an unramified extensions $F'$ of $F$.  Exciplicitly, we can write $$\sL=\bigoplus_{i\in I}\fo_{L_i}.$$ Let $\tilde{\sL}$ be the dual lattice of $\sL$ under $\pair{\cdot,\cdot}_W$. Then
$$\tilde{\sL}=\left(\bigoplus_{i\in I_1}\fo_{L_i}\right)\oplus\left(\bigoplus_{i\in I_2}\varpi^{-1}\fo_{L_i}\right).$$ Set $\sfW_1=\sL/\varpi\tilde{\sL}$ and $\sfW_2=\tilde{\sL}/\sL$, which are $\sfk_F$-vector spaces equipped with the induced symplectic forms $\pair{\cdot,\cdot}_W$ and $\varpi\pair{\cdot,\cdot}_W$ respectively.  Note that $$\sfW_i=\bigoplus\limits_{j\in I_i}\sfL_j\textrm{ and } \dim_{\sfk_F}(\sfW_i)=2n_i=\sum\limits_{j\in I_i}2m_j\textrm{ for }
i=1,2.$$

Let $x$ be the vertex of $\CB^\red(G,F)$ attached to $jS$. Then $\Sigma=\left(x,0,{\bf1},\rho_{(S,\mu)}\right)$ is a depth-zero single block attached to $(S,\mu,j)$.  We have $\sfG_x=\sfG_1\times\sfG_2$ where $\sfG_i=\Sp(\sfW_i)$ for $i=1,2$. The elliptic maximal torus of $\sfG_x$ attached to $S$ is $\sfS=\sfS_1\times\sfS_2$ where $\sfS_i$ is the elliptic maximal torus of $\Sp(\sfW_i)$ corresponding to $\prod\limits_{j\in I_i}(\sfL_j,\sfL_j^\circ)$ for $i=1,2$. The depth-zero character $\mu$ of $S(F)$ descends to a character $\bar{\mu}=\bar{\mu}_1\times\bar{\mu}_2$ of $\sfS_1(\sfk_F)\times\sfS_2(\sfk_F)$ in general position. Then the Deligne-Lusztig cuspidal representation $\rho=\rho_{(S,\mu)}$ of $\sfG_x(\sfk_F)$ is $\kappa_1\otimes\kappa_2$ where $\kappa_i=\pm R_{\sfS_i}^{\sfG_i}(\bar{\mu}_i)$ for $i=1,2$.

Now let $V$ be a $2n'$-dimensional quadratic space over $F$ such that $$\pi_V:=\theta_{V,W,\psi}\left(\pi_{(S,\mu,j)}\right)$$ is the first occurrence of the nonzero theta lifts in the Witt tower of $V$. By \cite[Theorem A]{kud86} and \cite[Theorem A]{pan02}, $\pi_V$ is an irreducible depth-zero supercuspidal representation. Let $(y,\rho')$ be a depth-zero $K$-type of $\pi_V$ for $H=\O(V)$, where $y$ is a vertex of $\CB(H,F)$ and $\rho'$ is an irreducible cuspidal representation of $\sfH_y(\sfk_F)$. Let $\sL'\subset V$ be a lattice attached to $y$. Set $\sfV_1=\tilde{\sL}'/\sL'$ and $\sfV_2=\sL'/\varpi\tilde{\sL}'$, which are quadratic spaces over $\sfk_F$. Denote $\sfH_i=\O(\sfV_i)$ for $i=1,2$. Then $\sfH_y=\sfH_1\times\sfH_2$ and $\rho'=\kappa'_1\otimes\kappa'_2$, where $\kappa'_i$ is an irreducible cuspidal representation of $\sfH_i(\sfk_F)$ for $i=1,2$. Note that $(\sfG_1,\sfH_1)$ and $(\sfG_2,\sfH_2)$ are two reductive dual pairs over $\sfk_F$. According to \cite[Theorem 5.6]{pan02}, $\rho'$ must be the theta lift of $\rho$, which by definition means that  $\kappa'_i=\theta_{\sfV_i,\sfW_i,\bar{\psi}}(\kappa_i)$ for $i=1,2$. Since $\kappa_i=\pm R_{\sfS_i}^{\sfG_i}(\bar{\mu}_i)$, by \cite[Theorem]{sri} or Lemma \ref{lem. theta over finite field} we see $$\dim\sfV_i\geq\dim\sfW_i,\quad\forall\ i=1,2.$$ On the other hand, since $$\dim\sfV_1+\dim\sfV_2=\dim V=2n'$$ and we require $\dim V=2n$ in the statement of the proposition, it must be $\dim\sfV_i=\dim\sfW_i$ for $i=1,2$. By Lemma \ref{lem. theta over finite field}, for each $i=1,2$, the quadratic space $\sfV_i$ is unique up to equivalence so that there exists an embedding $\sfS_i\incl\sfH_i$. Hence $V$ is unique up to equivalence. Furthermore we have $\kappa'_i=\pm R_{\sfS_i}^{\sfH_i}(\bar{\mu}^{-1}_i)$. Therefore $\pi_V$ is a regular depth-zero supercuspidal representation.

Actually, if we set $$\left(V,\pair{\cdot,\cdot}_V\right)=\left(L,\pair{\cdot,\cdot}_{c\tau\varpi}\right)\ \textrm{and}\ (S,\mu^{-1},j_\theta)=\left(L,L^\circ,c\tau\varpi,\chi^{-1}\right),$$  the subsets $I_1$ and $I_2$ of $I$ introduced before have the following alternative description:
$$I_1=\{i\in I \ |\  \val_{L_i}(c_i\tau_i\varpi)\textrm{ is odd} \}\ \textrm{and}\ I_2=\{i\in I \ |\  \val_{L_i}(c_i\tau_i\varpi)\textrm{ is even} \}.$$
Let $y$ be the vertex attached to $j_\theta(S)$ and $\sL'$ the corresponding lattice. Then, analogous to the symplectic case, we have $$\tilde{\sL}'/\sL'=\bigoplus\limits_{i\in I_1}\sfL_i=\sfV_1,\quad  \sL'/\varpi\tilde{\sL}'=\bigoplus\limits_{i\in I_2}\sfL_i=\sfV_2,$$ and $\sfH_y=\O(\sfV_1)\times\O(\sfV_2)$. The previous arguments immediately imply that $\theta_{V,W,\psi}\left(\pi_{(S,\mu,j)}\right)=\pi_{(S,\mu^{-1},j_\theta)}$.
\end{proof}

\begin{rem}\label{rem. depth zero classification}
We retain the notation in Proposition \ref{prop. depth zero}. Suppose that $(S,\mu,j)$ corresponds to $(L,L^\circ,c,\chi)=\prod\limits_{i\in I}(L_i,L_i^\circ,c_i,\chi_i)$. Let $I_1$ (resp. $I_2$) be the subset of $I$ containing the elements $i$'s such that $\val_{L_i}(c_i)$'s are even (resp. odd). Set $r=|I_1|$ and $s=|I_2|$. We have
\begin{itemize}
\item if $r$ and $s$ is odd, then $\disc(V)=1$ and $\SO(V)$ is split;
\item if $r$ and $s$ are even, then $\disc(V)=1$ and $\SO(V)$ is non-split;
\item if $r$ is even and $s$ is odd, then $\disc(V)\neq1\in\fo_F^\times/\fo_F^{\times 2}$, the Hasse invariant $\varepsilon(V)=1$, and $\SO(V)$ is quasi-split;
    \item if $r$ is odd and $s$ is even, then $\disc(V)\neq1\in\fo_F^\times/\fo_F^{\times 2}$, the Hasse invariant $\varepsilon(V)=-1$, and $\SO(V)$ is quasi-split.

\end{itemize}
\end{rem}

\subsection{Positive depth single block}
Next we consider the theta lifts of regular supercuspidal representations whose Howe factorizations are positive depth single blocks.

Let $\pi=\pi_{(S,\mu,j)}$ be a regular supercuspidal representation of $G(F)$ such that its Howe factorization $\Sigma=(x,\Gamma,\phi,\rho_{(S,\mu_0)})$ is a single block of positive depth $r$. Note that $\det(\Gamma)\neq 0$. Suppose that $(S,\mu,j)$ corresponds to $(L,L^\circ,c,\chi)$. We may assume $(W,\pair{\cdot,\cdot}_W)=(L,\pair{\cdot,\cdot}_c)$, identify $\Lie(jS)$ with $L^-$, and assume $\Gamma\in\Lie(jS)$.

As introduced in \cite[Definition 5.8]{lm}, we set $V=W$ as an $F$-vector space and equip it with the form $\pair{v_1,v_2}_\Gamma=\pair{v_1,\Gamma v_2}_W$. Since $\Gamma\in\fg(F)$, $(V,\pair{\cdot,\cdot}_\Gamma)$ is a quadratic space. Denote $H=\O(V)$, $\fh=\Lie(H)$. We view both of $G$ and $H$ as subgroups of $\GL(V)$, and view both of $\fg$ and $\fh$ as sub-Lie algebras of $\End(V)$. Then $\Gamma$ also lies in $\fh$,  and  $$H^0=Z_H(-\Gamma)=Z_G(\Gamma)=G^0.$$ The vertex $x\in\CB(G^0)$ is associated with a lattice function $\sL_x$ in $W$. Let $\sL'_x$ be the lattice function in $V$, which is defined in Lemma 5.9 of {\em loc. cit.} Let $x'\in\CB(H^0)$ be the vertex attached to $\sL_x'$. Then $G_x=H_{x'}$. See the Remarks after Proposition 5.13 of {\em loc. cit.} for details. Put $$\Sigma':=\left(x',-\Gamma,\phi^{-1},\rho_{(S,\mu_0)}^\vee\right),$$ which is a single block of $H$ of positive depth $r$.

\begin{prop}\label{prop. positive depth}
We have the following statements.
\begin{enumerate}
\item $\theta_{V,W,\psi}(\pi)=\pi_{\Sigma'}$.
\item $\pi_{\Sigma'}$ is a regular supercuspidal representation attached to certain regular elliptic triple $(S,\mu^{-1},j_\theta)$ of $H$.
\item Suppose that $(S,\mu,j)$ corresponds to $(L,L^\circ,c,\chi)=\prod\limits_{i\in I}(L_i,L_i^\circ,c_i,\chi_i)$. Then
     \begin{itemize}
     \item $(S,\mu^{-1},j_\theta)$ corresponds to $$(L,L^\circ,c_\theta,\chi^{-1})=(L,L^\circ,-c\gamma,\chi^{-1})=\prod\limits_{i\in I}(L_i,L_i^\circ,-c_i\gamma_i,\chi^{-1}_i),$$
         \item $(V,\pair{\cdot,\cdot}_\Gamma)\cong(L,\pair{\cdot,\cdot}_{-c\gamma})$,
             \end{itemize} where $\gamma=\prod\limits_{i\in I}\gamma_i\in L^-$ is $\Gamma$ via the identification $\Lie(jS)=L^-$.
\end{enumerate}
\end{prop}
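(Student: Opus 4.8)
The plan is to obtain (1) directly from \cite[\S 5]{lm}, and then to translate the datum $\Sigma'$ into the language of regular elliptic triples in order to prove (2) and (3). For (1): since $\det(\Gamma)\neq0$ we have ${}^0V=\ker(\Gamma)=0$, so the block decomposition of $\Sigma$ consists only of the positive-depth single block $\Sigma$ itself, with no depth-zero block; hence the description of the theta lift of a positive-depth single block in \cite[\S 5]{lm} (via \cite[Definition 5.8, Proposition 5.13]{lm}) applies and yields $\theta_{V,W,\psi}(\pi_\Sigma)=\pi_{\Sigma'}$ for $V=(W,\pair{\cdot,\cdot}_\Gamma)$ and $\Sigma'=\left(x',-\Gamma,\phi^{-1},\rho_{(S,\mu_0)}^\vee\right)$ as constructed above. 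Since $H^0=G^0$ and $G_x=H_{x'}$, no genuine theta correspondence over finite fields intervenes here, so this is unconditional.

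For (2): first, $\Sigma'$ is a \emph{regular} supercuspidal $H$-datum, because its depth-zero type $\rho_{(S,\mu_0)}^\vee$ is the contragredient of $\rho_{(S,\mu_0)}$ and hence contains $\pm R_\sfS^{{}^\circ\sfG^0_x}(\bar\mu_0^{-1})$ with $\bar\mu_0^{-1}$ again in general position. Therefore $\pi_{\Sigma'}$ is a regular supercuspidal representation of $H(F)$, and by Kaletha's parametrization together with its classical-group form recalled in Section \ref{sec. prelim} it equals $\pi_{(S',\mu',j_\theta)}$ for a regular elliptic triple of $H$, unique up to $H(F)$-conjugacy. To identify $(S',\mu',j_\theta)$ I exhibit $\Sigma'$ as a Howe factorization of it: one takes the twisted Levi sequence $\vec H$ attached to $-\Gamma$, which agrees with $\vec G$ (centralizers are unchanged under negation, and since the restriction of $\Gamma$ to every $\Gamma$-stable subspace is invertible, intersecting with $\Sp(W)$ or with $\O(V)$ gives the same subgroups; in particular $H^0=G^0$); one replaces each $\phi_i$ by $\phi_i^{-1}$, which is represented by $-\Gamma_i$ and stays $H^{i+1}$-generic in the sense of \cite[Definition 3.9]{hm08} (genericity being insensitive to negating $\Gamma_i$ for $p$ large); and one takes $\mu_0^{-1}$ as the regular depth-zero character of $S(F)$ with respect to $H^0=G^0$, consistently with $\rho_{(S,\mu_0)}^\vee\cong\rho_{(S,\mu_0^{-1})}$. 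Then $S'=S$, the embedding $j_\theta$ is multiplication by $L^1$ on $L$ (which coincides with $j$ as a map into $G^0=H^0$), and $\mu'=\mu_0^{-1}\cdot(\phi^\circ)^{-1}\big|_{S(F)}=\mu^{-1}$, using $\mu=\mu_0\cdot\phi^\circ\big|_{S(F)}$.

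For (3): identify $(W,\pair{\cdot,\cdot}_W)$ with $(L,\pair{\cdot,\cdot}_c)$ and, via $\Lie(jS)=L^-$, identify $\Gamma$ with $\gamma=\prod_{i\in I}\gamma_i\in L^-$, so that $\Gamma$ acts on $V=W=L$ as multiplication by $\gamma$. Since $\gamma\in L^-$ we have $\overline{\gamma v}=\bar\gamma\bar v=-\gamma\bar v$, so for $v_1,v_2\in L$
\[
\pair{v_1,v_2}_\Gamma=\pair{v_1,\gamma v_2}_c=\Tr_{L/F}\big(c\,v_1\,\overline{\gamma v_2}\big)=\Tr_{L/F}\big((-c\gamma)\,v_1\bar v_2\big)=\pair{v_1,v_2}_{-c\gamma}.
\]
Moreover $\overline{-c\gamma}=-\bar c\,\bar\gamma=-(-c)(-\gamma)=-c\gamma$, so $-c\gamma\in L^\circ$ (componentwise $-c_i\gamma_i\in L_i^\circ$), confirming that $(L,\pair{\cdot,\cdot}_{-c\gamma})$ is a $2n$-dimensional quadratic space and that $(V,\pair{\cdot,\cdot}_\Gamma)\cong(L,\pair{\cdot,\cdot}_{-c\gamma})$. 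The torus $S=L^1$ still preserves this form by multiplication, which is exactly the embedding $j_\theta\colon S=L^1\incl\U(L)\cong H$. By the parametrization recalled in Section \ref{sec. prelim}, this together with $\mu'=\mu^{-1}=\chi^{-1}$ from (2) says precisely that $(S,\mu^{-1},j_\theta)$ corresponds to $(L,L^\circ,-c\gamma,\chi^{-1})=\prod_{i\in I}(L_i,L_i^\circ,-c_i\gamma_i,\chi_i^{-1})$.

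The step I expect to be hardest is the claim used in (2) that applying $\Gamma_i\mapsto-\Gamma_i$, $\phi_i\mapsto\phi_i^{-1}$, $\mu_0\mapsto\mu_0^{-1}$ sends a Howe factorization of $(jS,\mu)$ to a Howe factorization of $(j_\theta S,\mu^{-1})$ — equivalently, that $(j_\theta S,\mu^{-1})$ satisfies the conditions of \cite[Definition 3.6.5]{kal}. This rests on the symmetry of the genericity and regularity conditions under negation/inversion of the data (valid for $p$ large) and on the identity $\rho_{(S,\mu_0)}^\vee\cong\rho_{(S,\mu_0^{-1})}$, which has to be traced through Kaletha's construction of $\rho_{(S,\mu_0)}$ in \cite[\S 3.4.3--\S 3.4.4]{kal} and the duality $R_\sfS^{{}^\circ\sfG^0_x}(\lambda)^\vee\cong R_\sfS^{{}^\circ\sfG^0_x}(\lambda^{-1})$ for Deligne--Lusztig virtual representations.
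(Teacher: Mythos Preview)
Your proposal is correct and follows essentially the same approach as the paper: cite Loke--Ma for (1) (the paper points to \cite[Proposition~7.1]{lm} rather than \S5, but the content is the same), then observe that $\Sigma'$ is regular and that $\rho_{(S,\mu_0)}^\vee=\rho_{(S,\mu_0^{-1})}$ makes $\Sigma'$ a Howe factorization of $(S,\mu^{-1},j_\theta)$ with $j_\theta$ the composite $S\stackrel{j}{\hookrightarrow}G^0=H^0\hookrightarrow H$, and finally read off the datum $(L,L^\circ,-c\gamma,\chi^{-1})$ from the identity $\pair{\cdot,\cdot}_\Gamma=\pair{\cdot,\cdot}_{-c\gamma}$. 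Your write-up is in fact more detailed than the paper's (which asserts the form identity and the equality $\rho_{(S,\mu_0)}^\vee=\rho_{(S,\mu_0^{-1})}$ without further comment), and the ``hardest step'' you flag is exactly the one the paper takes for granted.
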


\begin{proof}
The first statement is due to \cite[Proposition 7.1]{lm} or a special case of the Main Theorem of {\em loc. cit.} By definition $\pi_{\Sigma'}$ is regular.
Suppose that $(S,j)$ corresponds to $(L,L^\circ,c)$. Then $(W,\pair{\cdot,\cdot}_W)=(L,\pair{\cdot,\cdot}_{c})$ and $(V,\pair{\cdot,\cdot}_\Gamma)=(L,\pair{\cdot,\cdot}_{-c\gamma})$. Let $j_\theta:S\incl H$ denote the embedding given by the composition
$$S\stackrel{j}{\incl}G^0=H^0\incl H.$$ Then $(S,j_\theta)$ corresponds to $(L,L^\circ,-c\gamma)$, and $x'\in\CB(H^0)$ is the vertex attached to $j_\theta S$. Since $\rho^\vee_{(S,\mu_0)}=\rho_{(S,\mu_0^{-1})}$, $\Sigma'=(x',-\Gamma,\phi^{-1},\rho^\vee_{(S,\mu_0)})$ is a Howe factorization of $(S,\mu^{-1},j_\theta)$. Therefore $\pi_{\Sigma'}=\pi_{(S,\mu^{-1},j_\theta)}$.
\end{proof}

\subsection{General case}
Now we treat the theta lifts of general regular supercuspidal representations.
Let $\pi=\pi_{(S,\mu,j)}$ be a regular supercuspidal representation of $G(F)$ with a Howe factorization $\Sigma=(x,\Gamma,\phi,\rho)$. Let $\Sigma=\bigoplus\limits_{0\leq\ell\leq b}{^\ell\Sigma}$ be the block decomposition of $\Sigma$ into $b$ positive depth blocks $^\ell\Sigma=\left({^\ell x},{^\ell\Gamma},{^\ell\phi},{^\ell\rho}\right)$ and a depth zero block $^0\Sigma=\left({^0 x},0,{\bf 1},{^0\rho}\right)$, and $W=\bigoplus\limits_{0\leq\ell\leq b}{^\ell W}$ the decomposition of the space $W$.

Since $jS$ is a maximally unramified elliptic maximal torus of   $G^0=\prod\limits_{0\leq\ell\leq b}{^\ell G^0}$, then $S=\prod\limits_{0\leq\ell\leq b}{^\ell S}$ so that $j{^\ell S}$ is a maximally unramified elliptic maximal torus of ${^\ell G^0}$ for each $\ell$. Let $\mu=\prod\limits_{0\leq\ell\leq b}{^\ell\mu}$ be the decomposition of $\mu$ with respect to that of $S$. Then $({^\ell S,{^\ell\mu},j})$ is a regular elliptic triple of ${^\ell G}$ for each $\ell$.
Suppose that $(S,\mu,j)$ corresponds to $$\left(L,L^\circ,c,\chi\right)=\prod\limits_{0\leq\ell\leq b}\left({^\ell L},{^\ell L^\circ},{^\ell c},{^\ell\chi}\right),$$ and $\left({^\ell S,{^\ell\mu},j}\right)$ corresponds to $\left({^\ell L},{^\ell L^\circ},{^\ell c},{^\ell\chi}\right)$. According to Propositions \ref{prop. depth zero} and \ref{prop. positive depth}, we obtain a regular elliptic triple $\left({^\ell S,{^\ell\mu^{-1}},j_\theta}\right)$ associated with the data $\left({^\ell L},{^\ell L^\circ},{^\ell c_\theta},{^\ell\chi^{-1}}\right)$ for each $\ell$. Set $$(S,\mu^{-1},j_\theta)=\prod\limits_{0\leq\ell\leq b}\left({^\ell S,{^\ell\mu^{-1}},j_\theta}\right),$$ and $$(L,L^\circ,c_\theta,\chi^{-1})=\prod\limits_{0\leq\ell\leq b}\left({^\ell L},{^\ell L^\circ},{^\ell c},{^\ell\chi^{-1}}\right).$$

\begin{thm}\label{thm. theta}
Up to equivalence, there exists a unique $2n$-dimensional quadratic space $(V,\pair{\cdot,\cdot}_V)$ over $F$ such that $\pi_V:=\theta_{V,W,\psi}(\pi)$ is nonzero. Moreover we have \begin{enumerate} \item $(V,\pair{\cdot,\cdot}_V)\cong(L,\pair{\cdot,\cdot}_{c_\theta})$,  \item $\pi_V=\pi_{(S,\mu^{-1},j_\theta)}$ is a regular supercuspidal representation, \item $(S,\mu^{-1},j_\theta)$ corresponds to $(L,L^\circ,c_\theta,\chi^{-1})$.
\end{enumerate}
\end{thm}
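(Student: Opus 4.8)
The plan is to reduce Theorem \ref{thm. theta} to the two special cases already established, Propositions \ref{prop. depth zero} and \ref{prop. positive depth}, by means of the block decomposition and the behaviour of theta lifts under orthogonal direct sums. First I would fix a Howe factorization $\Sigma=(x,\Gamma,\phi,\rho)$ of $\pi=\pi_{(S,\mu,j)}$ and its block decomposition $\Sigma=\bigoplus_{0\leq\ell\leq b}{^\ell\Sigma}$, with the attendant orthogonal decomposition $W=\bigoplus_{0\leq\ell\leq b}{^\ell W}$ and torus decomposition $S=\prod_{0\leq\ell\leq b}{^\ell S}$, exactly as set up in the paragraph preceding the statement. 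By \cite[\S4]{lm}, each ${^\ell\Sigma}$ is a single block (depth zero for $\ell=0$, positive depth for $\ell\geq1$) and, by the results recalled there, $\pi$ is built from the single-block constituents $\pi_{({^\ell S},{^\ell\mu},j)}$ of ${^\ell G}(F)=\U({^\ell W})(F)$ in a way compatible with compact induction.

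The core step is a ``tower-compatibility'' statement: if ${^\ell V}$ is the (unique, by Propositions \ref{prop. depth zero} and \ref{prop. positive depth}) quadratic space on which the theta lift of $\pi_{({^\ell S},{^\ell\mu},j)}$ from $\Sp({^\ell W})$ is nonzero, then $V:=\bigoplus_{0\leq\ell\leq b}{^\ell V}$ is the unique $2n$-dimensional quadratic space carrying a nonzero theta lift of $\pi$, and $\theta_{V,W,\psi}(\pi)=\bigotimes_\ell\theta_{{^\ell V},{^\ell W},\psi}(\pi_{({^\ell S},{^\ell\mu},j)})$ amalgamated along the same block formalism. For existence, one uses that the theta lift to $\O(\bigoplus{^\ell V})$ is nonzero because it is nonzero on each factor together with the compatibility of the Weil representation with orthogonal direct sums $\omega_{W\otimes(\oplus{^\ell V})}\cong\bigotimes_\ell\omega_{W\otimes{^\ell V}}$ restricted to the relevant compact-open data; this is precisely the mechanism behind \cite[Proposition 7.1]{lm} and the Main Theorem of {\em loc. cit.}, which I would invoke directly. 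For uniqueness and the dimension count, one argues as in the proof of Proposition \ref{prop. depth zero}: the first-occurrence space $V'$ of $\pi$ satisfies $\dim V'\geq\dim{^\ell V}$ block-by-block after passing to the appropriate reductive quotients, forcing $\dim V'=\sum\dim{^\ell V}=2n$ and hence $V'\cong V$.

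Once the lift is identified as $\pi_{\Sigma'}$ with $\Sigma'=\bigoplus_\ell{^\ell\Sigma'}$ the direct sum of the blocks produced in Propositions \ref{prop. depth zero} and \ref{prop. positive depth}, the remaining assertions are formal. By those propositions each ${^\ell\Sigma'}$ is a Howe factorization of $\left({^\ell S},{^\ell\mu^{-1}},j_\theta\right)$, associated to $\left({^\ell L},{^\ell L^\circ},{^\ell c_\theta},{^\ell\chi^{-1}}\right)$; since a direct sum of single-block Howe factorizations is again a Howe factorization (cf. \cite[Lemma 4.7]{lm} and the compatibility of Howe factorizations with block decomposition in \cite[\S3.7]{kal}), $\Sigma'$ is a Howe factorization of $(S,\mu^{-1},j_\theta)=\prod_\ell\left({^\ell S},{^\ell\mu^{-1}},j_\theta\right)$, which corresponds to $(L,L^\circ,c_\theta,\chi^{-1})$, and the quadratic space attached to it is $\bigoplus_\ell\left({^\ell L},\pair{\cdot,\cdot}_{{^\ell c_\theta}}\right)\cong(L,\pair{\cdot,\cdot}_{c_\theta})$. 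This gives all three clauses (1)--(3).

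I expect the main obstacle to be the block-by-block compatibility of the \emph{$p$-adic} theta lift with orthogonal direct sums at the level of supercuspidal data — i.e. verifying that the first-occurrence space genuinely decomposes as $\bigoplus{^\ell V}$ and that no ``cross terms'' between distinct blocks force a larger first occurrence. In the depth-zero case this was handled via Pan's results on theta lifts of depth-zero types over finite fields \cite{pan02} together with the Kudla/Pan bounds \cite{kud86}; in the positive-depth single-block case it is \cite[Proposition 7.1]{lm}. For the general case the cleanest route is to cite the Main Theorem of \cite{lm}, which already computes the theta lift of an arbitrary tame supercuspidal representation in terms of its block-decomposed datum, and simply observe that our $\Sigma'$ is exactly the datum it outputs; the only thing left to check is that $\dim V=2n$ selects the first-occurrence space, which is immediate from the block-wise dimension count above. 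The verification that Howe factorizations are stable under the direct-sum operation used here is routine but should be stated carefully, since it is what lets us pass from ``$\pi_V=\pi_{\Sigma'}$'' to the clean parametrization ``$(S,\mu^{-1},j_\theta)\leftrightarrow(L,L^\circ,c_\theta,\chi^{-1})$''.
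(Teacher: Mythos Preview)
Your proposal is correct and follows essentially the same approach as the paper: the paper's proof is the one-line statement that the theorem ``is a direct consequence of Propositions \ref{prop. depth zero} and \ref{prop. positive depth} and the Main Theorem of \cite{lm}'', and your write-up simply unpacks what that direct consequence entails (block decomposition, block-by-block application of the two propositions, and the Main Theorem of \cite{lm} to glue the blocks). Your expanded discussion of the dimension/first-occurrence check and of how the single-block Howe factorizations reassemble into a Howe factorization of $(S,\mu^{-1},j_\theta)$ is exactly the content implicit in the paper's citation.
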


\begin{proof}
It is a direct consequence of Propositions \ref{prop. depth zero} and \ref{prop. positive depth} and the Main Theorem of \cite{lm}.
\end{proof}

\section{Distinction}\label{sec. distinction}
In this section we investigate the relation between the local theta correspondence and distinction with respect to unramified Galois involutions. We will first give a criterion (see Proposition \ref{prop. distinction}) for testing distinction. It is a generalization of our prior result \cite{zha1}, and is also a refinement of \cite{zha2} for unramified Galois involutions. The key point is that we can show that the character ``$\eta_S$'' is trivial in this situation. Combining this criterion with Theorem \ref{thm. theta}, we derive the main result of this section (see Theorem \ref{thm. theta and distinction}).

\subsection{A criterion}\label{subsec. criterion}
In this subsection, we assume that $G$ is a connected reductive group over $F$, and $E$ is an unramified quadratic field extension of $F$.  We fix an element $\iota\in \fo_E^\times$ such that $\iota\in E^-$. Let $\FG$ denote the Weil restriction $\R_{E/F}G$. The nontrivial automorphism $\sigma\in\Gal(E/F)$ induces an involution, still denoted by $\sigma$, on $\FG$.

Let $\pi_{(\FS,\mu,j)}$ be a regular supercuspidal representation of $\FG(F)$. In \cite[Corollary 3.18]{zha2} we showed that $\pi_{(\FS,\mu,j)}$ is $G(F)$-distinguished if and only if $(\FS,\mu,j)$ is $G(F)$-conjugate to a regular elliptic triple $(\FS,\mu,j')$ of $\FG$ such that $j'\FS$ is $\sigma$-stable and $(\mu\circ j'^{-1})|_{(j'\FS)^\sigma(F)}=\eta_{j'\FS}$. See \cite[Definition 3.8]{zha2} or the proof below for the definition of $\eta_{j'\FS}$. We have the following refinement.

\begin{prop}\label{prop. distinction}
The representation $\pi_{(\FS,\mu,j)}$ is $G(F)$-distinguished if and only if $(\FS,\mu,j)$ is $G(F)$-conjugate to a regular elliptic triple $(\FS,\mu,j')$ such that $j'\FS$ is $\sigma$-stable and $(\mu\circ j'^{-1})|_{(j'\FS)^\sigma(F)}={\bf 1}$.
\end{prop}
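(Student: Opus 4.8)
The plan is to leverage the already-established criterion from \cite[Corollary 3.18]{zha2}, which characterizes $G(F)$-distinction via the condition $(\mu\circ j'^{-1})|_{(j'\FS)^\sigma(F)}=\eta_{j'\FS}$, and to show that in the \emph{unramified} setting the twisting character $\eta_{j'\FS}$ is in fact trivial. Once this is done, the proposition follows immediately by substituting $\eta_{j'\FS}={\bf 1}$ into the criterion of \emph{loc. cit.} So the entire content of the proof reduces to the computation of $\eta_{j'\FS}$.

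\smallskip

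First I would recall the definition of $\eta_{j'\FS}$ from \cite[Definition 3.8]{zha2}. It should be a quadratic character of $(j'\FS)^\sigma(F)$ built out of local data attached to the torus $\FS$ and the involution $\sigma$ --- typically something like a product over root-theoretic contributions, or a character defined through the action of $\sigma$ on the set of roots of $\FS$ in $\FG$, possibly involving Gauss sums, discriminants, or Langlands--Shelstad-type sign characters. Since $\FG=\R_{E/F}G$ and $\FS$ decomposes according to the combinatorial data $(L,L^\circ,c,\chi)$ of the preceding section (or rather its analogue for general connected $G$), I would next describe how $\sigma$ acts on these data: the point is that $E/F$ unramified means the extensions entering the torus remain unramified over $E$ and that $\sigma$ acts without introducing any ramified twists, so the local symbols and Gauss-sum factors defining $\eta$ all become trivial. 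Concretely, I expect each local factor of $\eta_{j'\FS}$ to be a quadratic Gauss sum or a norm-residue symbol over the residue field which, because everything is unramified and $p$ is large, evaluates to $1$; the parity obstructions that could make $\eta$ nontrivial in the ramified case simply do not arise.

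\smallskip

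The key steps, in order, are: (i) reduce to showing $\eta_{j'\FS}={\bf 1}$ using \cite[Corollary 3.18]{zha2}; (ii) unwind the definition of $\eta_{j'\FS}$ into its elementary constituents (root-wise contributions / local symbols); (iii) use the decomposition of $\FS$ into tori of the form $\R_{L^\circ/F}L^1$ with $L/L^\circ$ unramified and the $\sigma$-stability of $j'\FS$ to identify the $\sigma$-action on roots, showing the relevant $\sigma$-orbits pair up trivially; (iv) conclude that each local factor is a sum or symbol over an unramified extension that vanishes to give $1$, hence $\eta_{j'\FS}$ is trivial. The main obstacle will be step (iii)--(iv): carefully matching the abstract definition of $\eta_S$ (which in \cite{zha2} is phrased for general involutions, with contributions possibly split between ``ramified'' and ``unramified'' parts) against the concrete unramified data, and verifying that no residual sign survives --- in particular checking that the discriminant/Hasse-type contributions and the quadratic character of the residue field that appear in $\eta_S$ genuinely cancel, rather than combining into the nontrivial quadratic character of $\sfk_F^\times$. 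Everything else is formal bookkeeping on top of the prior work.
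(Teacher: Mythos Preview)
Your reduction step (i) is exactly right: by \cite[Corollary 3.18]{zha2} the whole proposition collapses to showing $\eta_{j\FS}={\bf 1}$ once $j\FS$ is $\sigma$-stable. But your steps (ii)--(iv) are built on a mistaken guess about what $\eta_{j\FS}$ is. It is \emph{not} assembled from Gauss sums, norm-residue symbols, or root-by-root sign characters. Rather, for a $\sigma$-symmetric Howe factorization $(\vec{\FG},x,\rho,\vec{\phi})$ one forms the Heisenberg quotients $\FW_i=\FJ^{i+1}/\FJ^{i+1}_+$ (finite $\BF_p$-vector spaces), takes their $\sigma$-fixed subspaces $W_i$, and defines $\chi_i(s)=\det(\Ad(s)|_{W_i})^{(p-1)/2}$ for $s\in S(F)$; then $\eta_{j\FS}=\prod_i\chi_i$. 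So what you actually need is that each $\det(\Ad(s)|_{W_i})=1$, and nothing in your plan addresses this.

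The paper's argument supplies the missing idea, and it is specific to the unramified hypothesis in a way your proposal does not anticipate. One identifies $W_i$ with $\fg^{i+1}(F)_{x,s_i:s_i+}/\fg^{i}(F)_{x,s_i:s_i+}$ (this was already in \cite{zha2}). The generic element $\Gamma_i\in\fz^i(E)_{-r_i}$ realizing $\phi_i$ may be taken with $\sigma(\Gamma_i)=-\Gamma_i$ by \cite[Lemma 5.15]{hm08}. Now the unramifiedness enters: choose $\iota\in\fo_E^\times\cap E^-$; then $\iota\Gamma_i$ is $\sigma$-fixed, still lies at depth $-r_i$ (since $\iota$ is a unit), and is still generic --- so $\iota\Gamma_i\in\fz^i(F)_{-r_i}$ endows $W_i$ with a \emph{symplectic} form preserved by the $S(F)$-action. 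Hence $S(F)\to\Sp(W_i)$, forcing the determinant to be $1$ and each $\chi_i$ to be trivial. The crucial point your proposal misses is this twist by the unit $\iota$: in the ramified case there is no unit in $E^-$, so one cannot transport the symplectic structure down to $W_i$ without changing depths, and that is precisely where a nontrivial $\eta$ can appear.
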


\begin{proof}
According to \cite[Corollary 3.18]{zha2}, it suffices to assume that $j\FS$ is $\sigma$-stable and show that $\eta_{j\FS}$ is trivial. Since $j\FS$ is $\sigma$-stable, the torus $S:=(j\FS)^\sigma$ is defined over $F$ and $j\FS=\R_{E/F}S$.

Let $\Psi=(\vec{\FG},x,\rho,\vec{\phi})$ be a Howe factorization of $(\FS,\mu,j)$, and $\vec{r}$ the depth of $\vec{\phi}$. We may assume that $\Psi$ is $\sigma$-symmetric, that is, $\sigma(x)=x$, $\sigma(\vec{\FG})=\vec{\FG}$ and $\vec{\phi}\circ\sigma=\vec{\phi}^{-1}$. Then $G^i:=\left(\FG^i\right)^\sigma$ is defined over $F$ and $\FG^i=\R_{E/F}G^i$ for each $i$. We denote by $\fg^i$ the Lie algebra of $G^i$, and $\fz^i$ the center of $\fg^i$. Set $s_i=\frac{r_i}{2}$.

Now we recall the definition of $\eta_{j\FS}$.
For each $0\leq i\leq d-1$, the quotient group $\FW_i:=\FJ^{i+1}/\FJ^{i+1}_+$ is a symplectic $\BF_p$-vector space, where $$\FJ^{i+1}=\FG^i(F)_{x,r_i}\FG^{i+1}(F)_{x,s_i}\quad\textrm{and}\quad
\FJ^{i+1}_+=\FG^i(F)_{x,r_i}\FG^{i+1}(F)_{x,s_i+}.$$ The symplectic structure of $\FW_i$ is attached to the $\FG^{i+1}$-generic character $\phi_i$ of $\FG^i(F)$. The group $j\FS(F)$ acts on $\FW_i$ by conjugate action and preserves the symplectic structure.  The involution $\sigma$ induces a natural linear transformation on $\FW_i$. Let $W_i$ be the $\sigma$-fixed subspace of $\FW_i$, on which the group $S(F)$ acts by conjugate action. Set $$\chi_i(x)=\det\left(\Ad(x)|_{W_i}\right)^{\frac{p-1}{2}},\quad\forall\ x\in S(F),$$ which is a quadratic character of $S(F)$. Then the character $\eta_{j\FS}$ of $S(F)$ is defined to be $\prod\limits_{0\leq i\leq d-1}\chi_i$.

In the proof of \cite[Lemma 3.9]{zha2} we showed that $\FW_i$ is $j\FS(F)$-equivariant isomorphic to $$\fg^{i+1}(E)_{x,s_i:s_i+}/\fg^{i}(E)_{x,s_i:s_i+},$$ and  $W_i$ is $S(F)$-equivariant isomorphic to $$\fg^{i+1}(F)_{x,s_i:s_i+}/\fg^{i}(F)_{x,s_i:s_i+}.$$ Hence $W_i$ is $S(F)$-equivariant isomorphic to $J^{i+1}/J^{i+1}_+$, where $$J^{i+1}=G^i(F)_{x,r_i}G^{i+1}(F)_{x,s_i}\quad\textrm{and}\quad J^{i+1}_+=G^i(F)_{x,r_i}G^{i+1}(F)_{x,s_i+}.$$

The character $\phi_i$ of $\FG^i(F)$ is realized by a $\FG^{i+1}$-generic element $\Gamma_i\in \fz^i(E)_{-r_i}$. According to \cite[Lemma 5.15]{hm08} we may assume $\sigma(\Gamma_i)=-\Gamma_i$.  Since $\iota$ is in $\fo_E^\times$, we see $\iota\Gamma$ still belongs to $\fz^i(E)_{-r_i}$. On the other hand, $\iota$ is also in $E^-$, so we have $\iota\Gamma_i\in\fz^i(F)_{-r_i}$. It is easy to see that $\iota\Gamma$ is $G^{i+1}$-generic.  Thus $\iota\Gamma_i$ gives rise to a symplectic structure of $W_i$, which is preserved under $S(F)$. Therefore we have a homomorphism $S(F)\ra \Sp(W_i)$. Hence $\chi_i$ is trivial for each $i$, and so is $\eta_{j\FS}$.
\end{proof}

The above proposition has the following direct consequence, whose proof is exactly the same as that of \cite[Corollary 3.17]{zha2}.

\begin{cor}\label{cor. distinction I}
If $\pi$ is a $G(F)$-distinguished regular supercuspidal representation of $\FG(F)$, then $\pi^\vee\cong \pi\circ\sigma$.
\end{cor}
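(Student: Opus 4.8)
The plan is to imitate the proof of \cite[Corollary 3.17]{zha2}, with Proposition \ref{prop. distinction} playing the role of the distinction criterion. First, since the equivalence class of $\pi_{(\FS,\mu,j)}$ depends only on the $G(F)$-conjugacy class of the triple $(\FS,\mu,j)$, Proposition \ref{prop. distinction} allows us to assume outright that $j\FS$ is $\sigma$-stable and that $(\mu\circ j^{-1})|_{(j\FS)^\sigma(F)}={\bf 1}$. Put $S=(j\FS)^\sigma$; then $S$ is an $F$-torus with $j\FS=\R_{E/F}S$, and under the identification $\FS(F)=j\FS(F)=S(E)$ the involution $\sigma$ acts as the nontrivial element of $\Gal(E/F)$.

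Next I would compute the effect of $\sigma$ on the parameter character. For $x\in S(E)$ one has $x\,\sigma(x)=\Nm_{E/F}(x)\in S(F)$, on which $\mu$ is trivial by the previous paragraph; hence $\mu(\sigma(x))=\mu(x)^{-1}$, i.e. $\mu\circ\sigma=\mu^{-1}$ as characters of $\FS(F)$.

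Finally I would identify both $\pi\circ\sigma$ and $\pi^\vee$ in terms of regular elliptic triples. Because $\sigma$ is an order-two $F$-automorphism of $\FG$ that stabilizes $j\FS$, transporting a Howe factorization of $(\FS,\mu,j)$ through $\sigma$ shows that $\pi\circ\sigma$ is again regular supercuspidal, attached to $(\FS,\mu\circ\sigma,j)$. On the other hand $\pi^\vee$ is attached to $(\FS,\mu^{-1},j)$: this follows from $\rho_{(S,\mu_0)}^\vee=\rho_{(S,\mu_0^{-1})}$ (already used in the proof of Proposition \ref{prop. positive depth}) together with the fact that passing to the contragredient inverts the quasicharacters $\vec\phi$ of a Howe factorization, so that the parameter character $\mu=\mu_0\cdot\phi^\circ|_{S(F)}$ is replaced by its inverse. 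Combining with the norm computation, $\pi\circ\sigma\cong\pi_{(\FS,\mu^{-1},j)}\cong\pi^\vee$, as desired.

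The only real point to be careful about, and the main obstacle in the sense of bookkeeping, is pinning down these two functorialities precisely: that twisting by $\sigma$ has the effect $\mu\mapsto\mu\circ\sigma$ on the parameter and that the contragredient has the effect $\mu\mapsto\mu^{-1}$, with no stray Weyl-conjugation and no dependence on the chosen Howe factorization, and that the $\sigma$-stability of $j\FS$ permits using one and the same embedding $j$ throughout. Once this is in place the argument is formal, which is why it goes through verbatim as in \cite[Corollary 3.17]{zha2}.
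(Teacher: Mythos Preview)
Your proposal is correct and follows exactly the approach the paper intends: the paper itself gives no details beyond ``the proof is exactly the same as that of \cite[Corollary 3.17]{zha2},'' and what you have written is precisely that argument, with Proposition \ref{prop. distinction} supplying the triviality of $\mu$ on $S(F)$ in place of the $\eta_{j\FS}$-twisted criterion used in \cite{zha2}. The norm computation $\mu(x\sigma(x))=1\Rightarrow\mu\circ\sigma=\mu^{-1}$ and the identifications $\pi\circ\sigma\cong\pi_{(\FS,\mu\circ\sigma,j)}$, $\pi^\vee\cong\pi_{(\FS,\mu^{-1},j)}$ are exactly the ingredients needed, and your caveat about bookkeeping (same $j$, no stray Weyl element) is well placed but poses no real difficulty once $j\FS$ is $\sigma$-stable.
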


In \cite[\S5]{kal} Kaletha defined the notion {\em regular supercuspidal L-parameters} $\varphi$ for $\FG$. He also constructed the $L$-packets $\Pi_\varphi(\FG)$ of $\varphi$ in the framework of rigid inner twists \cite{kal16}, which consist of certain regular supercuspidal representations of $\FG(F)$. In \cite{zha2} we showed that the sets $$\Pi_\varphi^\vee(\FG):=\left\{\pi^\vee\ |\ \pi\in\Pi_\varphi(\FG)\right\}\ \textrm{ and }\ \Pi_\varphi^\sigma(\FG):=\left\{\pi\circ\sigma\ |\ \pi\in\Pi_\varphi(\FG)\right\}$$ are indeed the $L$-packets attached to some regular supercuspidal $L$-parameters (cf. \cite[Propositions 4.14 and 4.18]{zha2}). The above corollary implies the following result directly, which confirms a conjecture of Lapid in this particular case.

\begin{cor}\label{cor. distinction II}
Suppose that $\pi$ is a $G(F)$-distinguished regular supercuspidal representation of $\FG(F)$ and belongs to the L-packet $\Pi_\varphi(\FG)$. Then we have $\Pi^\vee_\varphi(\FG)=\Pi^\sigma_\varphi(\FG)$.
\end{cor}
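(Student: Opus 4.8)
The plan is to deduce Corollary \ref{cor. distinction II} directly from Corollary \ref{cor. distinction I}, using the stated facts about $L$-packets. Since the statement identifies two specific $L$-packets, I would argue that they have nonempty intersection and then invoke the disjointness of distinct $L$-packets.

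First I would recall the setup: we are given that $\pi\in\Pi_\varphi(\FG)$ is $G(F)$-distinguished. By Corollary \ref{cor. distinction I}, we have $\pi^\vee\cong\pi\circ\sigma$. Now, by the hypothesis $\pi\in\Pi_\varphi(\FG)$, the representation $\pi^\vee$ lies in $\Pi_\varphi^\vee(\FG)$ by definition of the latter set, and $\pi\circ\sigma$ lies in $\Pi_\varphi^\sigma(\FG)$ by definition of that set. Hence $\pi^\vee\cong\pi\circ\sigma$ is a common member of $\Pi_\varphi^\vee(\FG)$ and $\Pi_\varphi^\sigma(\FG)$, so these two sets have nonempty intersection.

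Next I would use the fact, stated in the excerpt and attributed to \cite[Propositions 4.14 and 4.18]{zha2}, that $\Pi_\varphi^\vee(\FG)$ and $\Pi_\varphi^\sigma(\FG)$ are each the $L$-packet attached to some regular supercuspidal $L$-parameter; call these parameters $\varphi^\vee$ and $\varphi^\sigma$ respectively. Since the $L$-packets of distinct parameters are disjoint (this is part of Kaletha's construction in \cite[\S5]{kal}), and since $\Pi_{\varphi^\vee}(\FG)=\Pi_\varphi^\vee(\FG)$ and $\Pi_{\varphi^\sigma}(\FG)=\Pi_\varphi^\sigma(\FG)$ intersect nontrivially by the previous paragraph, we conclude $\varphi^\vee=\varphi^\sigma$ (up to equivalence), and therefore $\Pi_\varphi^\vee(\FG)=\Pi_\varphi^\sigma(\FG)$, as desired.

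I do not anticipate a genuine obstacle here, since all the substantive input (the character $\eta_{j\FS}$ being trivial, hence $\pi^\vee\cong\pi\circ\sigma$, and the fact that $\Pi_\varphi^\vee$ and $\Pi_\varphi^\sigma$ are themselves $L$-packets) has already been established. The only point requiring minor care is the appeal to disjointness of $L$-packets of distinct regular supercuspidal $L$-parameters; I would cite \cite[\S5]{kal} for this, and note that everything in sight lives in the category of regular supercuspidal representations so there is no ambiguity about which packets are being compared.
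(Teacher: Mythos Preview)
Your proposal is correct and matches the paper's approach: the paper simply states that Corollary \ref{cor. distinction II} follows directly from Corollary \ref{cor. distinction I}, and your argument spells out precisely the implicit reasoning (nonempty intersection of two $L$-packets forces equality).
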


\subsection{Theta lifts and distinction}\label{subsec. theta and distinction}
As before, let $E/F$ be an unramified quadratic field extension. We fix a uniformizer $\varpi$ of $\fp_E$ such that $\varpi\in E^-$, and also fix $\iota\in\fo_E^\times$ such that $\iota\in E^-$. Let $\psi$ be a fixed additive character of $E$ of conductor $\fp_E$, and $\psi_\iota$ the character of $F$ defined by $\psi_\iota(a)=\psi(\iota^{-1}a)$.

Let $(W,\pair{\cdot,\cdot}_W)$ be a $2n$-dimensional symplectic $F$-vector space. Set $$(\FW,\pair{\cdot,\cdot}_\FW)=(W,\pair{\cdot,\cdot}_W)\otimes E,$$ which is a $2n$-dimensional symplectic $E$-vector space. Let $G=\Sp(W)$ and $\FG=\Sp(\FW)$.

\begin{thm}\label{thm. theta and distinction}
Let $\pi$ be a $G(F)$-distinguished regular supercuspidal representation of $\FG(E)$. Then there exists a $2n$-dimensional quadratic $E$-vector space $(\FV,\pair{\cdot,\cdot}_\FV)$ equipped with an $F$-structure $(V,\pair{\cdot,\cdot}_V)$ such that $\theta_{\FV,\FW,\psi_\iota}(\pi)$ is nonzero and $\O(V)(F)$-distinguished.
\end{thm}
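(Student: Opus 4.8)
The plan is to combine the explicit description of the theta lift from Theorem \ref{thm. theta} with the distinction criterion of Proposition \ref{prop. distinction}, tracking how a $\sigma$-symmetric regular elliptic triple behaves under the recipe $(L,L^\circ,c,\chi)\mapsto(L,L^\circ,c_\theta,\chi^{-1})$. First I would apply Proposition \ref{prop. distinction} to $\pi=\pi_{(\FS,\mu,j)}$: since $\pi$ is $G(F)$-distinguished, after $G(F)$-conjugation we may assume $j\FS=\R_{E/F}S$ is $\sigma$-stable with $S=(j\FS)^\sigma$ an $F$-torus, and that $(\mu\circ j^{-1})|_{S(F)}=\mathbf 1$. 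Translating into the combinatorial parametrization of \S\,2.4 applied over $E$: the triple $(\FS,\mu,j)$ corresponds to a datum $(L,L^\circ,c,\chi)=\prod_{i\in I}(L_i,L_i^\circ,c_i,\chi_i)$ over $E$ where the $\sigma$-symmetry forces each $L_i$, $L_i^\circ$ to descend to extensions $\ell_i/\ell_i^\circ$ of $F$ (so $L_i=\ell_i\otimes_F E$, etc.), and the condition that $\chi$ be trivial on $S(F)=\prod_i (\ell_i)^1$ is the $F$-descent shadow of the distinction criterion. The symplectic $E$-space is $\FW=W\otimes_F E$, so one may take $(W,\pair{\cdot,\cdot}_W)=(\ell,\pair{\cdot,\cdot}_{c_0})$ with $c\in c_0\cdot\Nm$, i.e. $c$ itself can be chosen $\sigma$-fixed (lying in $\ell^-\subset L^-$).

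Next I would show that one can choose the auxiliary data in Theorem \ref{thm. theta} $\sigma$-equivariantly, so that $(\FV,\pair{\cdot,\cdot}_\FV)$ acquires an $F$-structure. For the positive-depth blocks the key is the good element $\Gamma$: by \cite[Lemma 5.15]{hm08} (exactly as used in the proof of Proposition \ref{prop. distinction}) one may assume $\sigma(\Gamma)=-\Gamma$, and then $\iota\Gamma$ is $\sigma$-fixed, lies in the appropriate filtration lattice, and is still generic — this is the reason for twisting the additive character to $\psi_\iota$. Thus $\gamma\in L^-$ representing $\Gamma$ can be taken with $\iota\gamma\in\ell$; the quadratic form $\pair{\cdot,\cdot}_{-c\gamma}=\pair{\cdot,\cdot}_{c\cdot(\iota^{-1})\cdot(-\iota\gamma)}$ on $V=W$ then has coefficient $c_\theta=-c\gamma$ with $\iota c_\theta\in\ell^\circ$ (using $c\in\ell^-$, $\iota\gamma\in\ell$, appropriately), giving the desired $F$-rational quadratic space $V$. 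For the depth-zero blocks, $c_\theta=c\tau\varpi$; here $\varpi\in E^-$ is the $\sigma$-antifixed uniformizer fixed at the start of \S\,\ref{subsec. theta and distinction}, and one chooses $\tau\in L^-$ with $\iota\tau\in\ell^\circ$ (using that $E/F$ is unramified so residue fields coincide and such $\tau$ exist), so that again $\iota c_\theta$ descends; note $\psi_\iota$ of conductor $\fp_F$ is what makes the finite-field theta lift for the dual pairs over $\sfk_F$ match the one over $\sfk_E$. Putting the blocks together, $(\FV,\pair{\cdot,\cdot}_\FV)\cong(L,\pair{\cdot,\cdot}_{c_\theta})$ is $\R_{E/F}$ of the $F$-quadratic space $(V,\pair{\cdot,\cdot}_V):=(\ell,\pair{\cdot,\cdot}_{\iota c_\theta})$ (up to rescaling the form by the unit $\iota$, which does not change the isometry group), and $\theta_{\FV,\FW,\psi_\iota}(\pi)=\pi_{(\FS,\mu^{-1},j_\theta)}$ is nonzero by Theorem \ref{thm. theta}.

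Finally I would verify that $\pi_{(\FS,\mu^{-1},j_\theta)}$ is $\O(V)(F)$-distinguished, again via Proposition \ref{prop. distinction} (applied to the orthogonal group $\FH=\R_{E/F}\O(V)$, which is legitimate since $\O(V)$ is reductive and $\pi_{(\FS,\mu^{-1},j_\theta)}$ is regular supercuspidal). By construction $j_\theta\FS=\R_{E/F}S$ is $\sigma$-stable — the embedding $j_\theta$ factors through the common twisted Levi $\FH^0=Z_{\FH}(-\Gamma)=Z_{\FG}(\Gamma)=\FG^0$ and through the $\sigma$-fixed $F$-forms — and the restriction of the character to $S(F)$ is $(\mu^{-1}\circ j^{-1})|_{S(F)}=\mathbf 1^{-1}=\mathbf 1$, since $\mu|_{S(F)}=\mathbf 1$. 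Hence the criterion is met and $\theta_{\FV,\FW,\psi_\iota}(\pi)$ is $\O(V)(F)$-distinguished. The main obstacle I anticipate is the bookkeeping in the second paragraph: showing that the $\sigma$-equivariant choices of $\Gamma$, of the uniformizer, and of $\tau$ are simultaneously compatible with the lattice-function constructions $\sL'_x$ of \cite[Lemma 5.9]{lm} and with the block decomposition, so that the resulting $F$-structure on $\FV$ is genuinely the one predicted by Theorem \ref{thm. theta} and not merely abstractly isomorphic; in particular one must check that the twist by $\iota$ is harmless (it only rescales symplectic/quadratic forms by a fixed unit) and that $\psi_\iota$ having conductor $\fp_F$ is exactly what is needed for the finite-field theta correspondence input (Lemma \ref{lem. theta over finite field}) to descend from $\sfk_E$ to $\sfk_F$.
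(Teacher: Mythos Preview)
Your approach is essentially the paper's: reduce via Proposition~\ref{prop. distinction} to a $\sigma$-stable triple, show $\sigma(c_\theta)=-c_\theta$ block by block (using $\sigma(\gamma)=-\gamma$ from \cite[Lemma~5.15]{hm08} for positive-depth blocks, and $\varpi\in E^-$ together with a $\sigma$-fixed $\tau$ for the depth-zero block), then twist by $\iota$ to descend $\FV$ to $F$ and switch the additive character to $\psi_\iota$ so that $\theta_{\FV',\FW,\psi}=\theta_{\FV,\FW,\psi_\iota}$. One small slip: in the depth-zero block you ask for $\tau\in L^-$ with $\iota\tau\in\ell^\circ$, but $\tau\in L^-$ forces $\overline{\iota\tau}=-\iota\tau$, so such $\tau$ must vanish; the correct (and simpler) choice is $\tau\in\ell^-$, i.e.\ $\tau$ $\sigma$-fixed, after which $\sigma(c\tau\varpi)=c\tau\,\sigma(\varpi)=-c\tau\varpi$ follows directly.

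There is, however, a genuine gap in your final step. Proposition~\ref{prop. distinction} is stated and proved only for \emph{connected} reductive $G$ (its proof goes through \cite[Corollary~3.18]{zha2}, and Kaletha's framework for regular supercuspidals is for connected groups), so you cannot invoke it for $\O(V)$ as you propose. The paper handles this by first applying Proposition~\ref{prop. distinction} to the connected group $\SO(V)$, which shows that $\pi^\circ_\FV:=\pi^\circ_{(\FS,\mu^{-1},j_\theta^\iota)}$ is $\SO(V)(F)$-distinguished; then, since $\pi_\FV=\ind_{\SO(\FV)(E)}^{\O(\FV)(E)}\pi^\circ_\FV$, a one-line Mackey decomposition over the single double coset in $\SO(\FV)(E)\backslash\O(\FV)(E)/\O(V)(F)$ yields $\Hom_{\O(V)(F)}(\pi_\FV,{\bf1})=\Hom_{\SO(V)(F)}(\pi^\circ_\FV,{\bf1})$. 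You need to insert this passage through $\SO$.
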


\begin{proof}

Suppose $\pi=\pi_{(\FS,\mu,j)}$. By Proposition \ref{prop. distinction}, we may assume that $j\FS$ is $\sigma$-stable. Therefore $(j\FS)^\sigma$ is an elliptic maximal torus of $G$. Set $S=j^{-1}(j\FS)^\sigma$. We may also assume $\mu|_{S(F)}=1$ according to Proposition \ref{prop. distinction}. Suppose $(S,j)$ corresponds to some datum $(K,K^\circ,c)=\prod\limits_{i\in I}(K_i,K_i^\circ,c_i)$ over $F$. Then $(\FS,\mu,j)$ corresponds to $(L,L^\circ,c,\chi)=\prod\limits_{i\in I}(L_i,L^\circ_i,c_i,\chi)$ where $L_i=K_i\otimes E$ and $L_i^\circ=K_i^\circ\otimes E$. We may assume $(W,\pair{\cdot,\cdot}_W)=(K,\pair{\cdot,\cdot}_c)$ and $(\FW,\pair{\cdot,\cdot}_\FW)=(L,\pair{\cdot,\cdot}_c)$. Note that $\sigma(c)=c$.

Set $(\FV',\pair{\cdot,\cdot}_{\FV'})=(L,\pair{\cdot,\cdot}_{c_\theta})$ as in Theorem \ref{thm. theta}. Then $\pi_{\FV'}=\theta_{\FV',\FW,\psi}(\pi)$ is nonzero and $\pi_{\FV'}=\pi_{(\FS,\mu^{-1},j_\theta)}$. Put $$\left(L,L^\circ,c_\theta\right)=\left(\prod\limits_{i\in I_0}(L_i,L^\circ_i,c_i\tau_i\varpi)\right)\left(\prod\limits_{i\in I_{>0}}(L_i,L^\circ_i,c_i\gamma_i)\right),$$ where the index $I_0$ corresponds to the depth-zero single block and $I_{>0}$ corresponds to positive depth single blocks.

Recall $\tau_i\in L_i^-$. Since $L_i=K_i\otimes E$ and $L_i^\circ=K_i^\circ\otimes E$, we may require $\tau_i\in K_i^-$. According to \cite[Lemma 5.15]{hm08}, we may assume that $\gamma_i$ satisfies $\sigma(\gamma_i)=-\gamma_i$ for all $i\in I_{>0}$. Therefore $\sigma(c_i\tau_i\varpi)=-c_i\tau_i\varpi$ for each $i\in I_0$ and $\sigma(c_i\gamma_i)=-c_i\gamma_i$ for all $i\in I_{>0}$. In summary, we have $\sigma(c_\theta)=-c_\theta$.

Set $(\FV,\pair{\cdot,\cdot}_\FV)=(\FV',\iota\pair{\cdot,\cdot}_{\FV'})$. Then $(\FV,\pair{\cdot,\cdot}_\FV)=\left(L,\pair{\cdot,\cdot}_{\iota c_\theta}\right)$ and $\sigma(\iota c_\theta)=\iota c_\theta$. We may identify $\O(\FV)$ with $\O(\FV')$ canonically. Let $j_\theta^\iota$ be the composition $\FS\stackrel{j_\theta}{\incl}\O(\FV')=\O(\FV)$. Then $(\FS,j^\iota_\theta)$ corresponds to $(L,L^\circ,\iota c_\theta).$ Since $\sigma(\iota c_\theta)=\iota c_\theta$, we have $\iota c_\theta\in K^\circ$. Hence $\FV$ has an $F$-structure. To be precise, $(\FV,\pair{\cdot,\cdot}_\FV)=(V,\pair{\cdot,\cdot}_V)\otimes E$ where $(V,\pair{\cdot,\cdot}_V)=(K,\pair{\cdot,\cdot}_{\iota c_\theta})$.

Write $\FH^\circ=\SO(\FV)$, $\FH=\O(\FV)$, $H^\circ=\SO(V)$, $H=\O(V)$, $\pi_\FV^\circ=\pi^\circ_{(\FS,\mu^{-1},j_\theta^\iota)}$, and $\pi_\FV=\pi_{(\FS,\mu^{-1},j_\theta^\iota)}$. Note that $\pi_\FV=\theta_{\FV,\FW,\psi_\iota}(\pi)$. It is obvious that $j_\theta^\iota\FS$ is $\sigma$-stable, $(j^\iota_\theta\FS)^\sigma=j^\iota_\theta S$, and $(S,j_\theta^\iota)$ corresponds to $(K,K^\circ,\iota c_\theta)$. Since $\mu^{-1}|_{S(F)}=1$, by Proposition \ref{prop. distinction}, we see that $\pi^\circ_\FV$ is $H^\circ(F)$-distinguished. Since $\pi_\FV=\ind_{\FH^\circ(F)}^{\FH(F)}\pi_\FV^\circ$, according to Mackey theory, we have
$$\begin{aligned}\Hom_{H(F)}(\pi_\FV,{\bf1})&=\bigoplus_{h\in\FH^\circ(F)\bs \FH(F)/H(F)}\Hom_{\FH^\circ(F)\cap {^hH(F)}}(\pi_\FV^\circ,{\bf1})\\
&=\Hom_{H^\circ(F)}(\pi_\FV^\circ,{\bf1}).\end{aligned}$$
Therefore $\pi_\FV$ is $H(F)$-distinguished.

\end{proof}

\s{\small Chong Zhang\\
Department of Mathematics, Nanjing University,\\
Nanjing 210093, Jiangsu, P. R. China.\\
E-mail address: \texttt{zhangchong@nju.edu.cn}}

\end{document}